\newtheorem{thm}{Theorem}[section]
\newtheorem{lemm}[thm]{Lemma}
\newtheorem{coro}[thm]{Corollary}
\newtheorem{prop}[thm]{Proposition}
\theoremstyle{definition}
\newtheorem{defi}[thm]{Definition}
\newtheorem{remark}[thm]{Remark}
\newtheorem{quest}[thm]{Question}
\begin{document}

\title{Twisting operators and centralisers of Lie type groups over local rings}

\author{Zhe Chen}

\address{Department of Mathematics, Shantou University, Shantou, China}

\email{zhechencz@gmail.com}

\begin{abstract}
We extend the classical result asserting that the twisting operator preserves certain Deligne--Lusztig character values for truncated formal power series; along the way we discuss some properties of centralisers. This leads us to the construction of an action of $\mathrm{GL}_n(\mathbb{F}_q[[\pi]]/\pi^r)$ on a Springer fibre intersected by Deligne--Lusztig varieties; we determine the primitivities of the induced cohomological representations for single cycles. The case of $\mathrm{SL}_2$ over finite dual numbers is presented with a criterion on semisimple orbit representations.
\end{abstract}

\maketitle

\tableofcontents

\section{Introduction}\label{section:Intro}

Let $\mathbb{F}_q$ be a finite field with $q$ elements. Consider the finite groups $\mathrm{GL}_n(\mathbb{F}_{q^d})$ and $\mathrm{GL}_n(\mathbb{F}_q)$, where $d\in\mathbb{Z}_{>0}$. A natural question to wonder is: What are the relations between their characters? An answer was given by Shintani \cite{Shintani_twoRemarks} in 1976: He established a bijection from the set of Galois-invariant irreducible characters of $\mathrm{GL}_n(\mathbb{F}_{q^d})$ to the set of irreducible characters of $\mathrm{GL}_n(\mathbb{F}_q)$. This work was later developed into the theory of Shintani descents, which plays an important role in the geometric side of character theory; see \cite{Kawanaka_IrrCharUnitaryGp}, \cite{Lusztig_84_OrangeBook}, \cite{Asai_twisting_op_classicalgroup}, \cite{DM_L-function_Book}, \cite{DM_LusztigFunctorShintaniDescent}, \cite{Shoji_ShintaniAlgGp}, \cite{Shoji_ShintaniSL(n)}, \cite{Deshpande_ShintaniDescent_compositio}.

\vspace{2mm} In the very special case $d=1$, Shintani descent is the natural operation switching the Lang isogeny on the space of class functions
$$f\longmapsto\mathrm{Sh}_G(f)\colon h^{-1}F(h)\mapsto f(F(h) h^{-1}),$$ 
called the twisting operator (see Definition~\ref{defi:Sh}), whose behaviour can be mysterious for groups admitting disconnected centralisers. In this paper we study this operator and related centraliser properties for connected reductive groups over a (truncated) discrete valuation ring in positive characteristic (like $\mathrm{SL}_n(\mathbb{F}_q[[\pi]]/\pi^r)$).

\vspace{2mm} In Section~\ref{sec:pre} we recall some basic properties of $\mathrm{Sh}_G$. And in Section~\ref{sec:Group over local ring} we make some preparations on reductive groups over formal power series rings and recall the orbit construction. 

\vspace{2mm} In Section~\ref{sec:higher DL} we consider two aspects concerning Deligne--Lusztig theory. First, by using a generalised Green function character formula and by generalising a basic property of semisimple centralisers, we extend a result of Asai and Digne--Michel, showing that $\mathrm{Sh}_G$ preserves the higher Deligne--Lusztig character values under a condition on unipotent parts; see Theorem~\ref{thm:main result}. 

\vspace{2mm} Second, motivated by a group embedding used in a centraliser discussion (in Proposition~\ref{prop:unipotent centraliser prop}), we observe that $\mathrm{GL}_n(\mathbb{F}_q[[\pi]]/\pi^r)$ can be viewed as a unipotent centraliser; this allows us to construct an action of $\mathrm{GL}_n(\mathbb{F}_q[[\pi]]/\pi^r)$ on the intersection $\mathcal{B}_{u,w}$ of a Springer fibre and a Deligne--Lusztig variety (see Definition~\ref{defi:B_{u,w}}). While both Springer fibres and Deligne--Lusztig varieties are of vital importance in geometric representation theory, it seems these intersections $\mathcal{B}_{u,w}$ have not been considered in literature before. We determine the primitivities of the representations afforded by the cohomology of these varieties for the cycles $w=(1,2,...,z)$; see Theorem~\ref{thm:R_{u,w}}.

\vspace{2mm} In Section~\ref{sec:expl} we study the behaviour of the twisting operator on primitive representations of $\mathrm{SL}_2([[\pi]]/\pi^2)$ in odd characteristic. We find that the semisimple orbit representations are exactly the ones invariant under $\mathrm{Sh}_G$; see Corollary~\ref{coro:semisimple of SL_2 are twist-invariant}.

\vspace{2mm}{\bf Acknowledgement.} The author thanks Alexander Stasinski for several useful comments on an earlier version of this paper, and thanks Yongqi Feng for helpful conversations.

\section{Preliminaries}\label{sec:pre}

In this section we recall some basics of the twisting operator following \cite{DM_L-function_Book}. 

\vspace{2mm} Let $G$ be the $\overline{\mathbb{F}}_q$-base change of a connected smooth affine group scheme over ${\mathbb{F}}_q$, and let $F$ be the associated geometric Frobenius endomorphism. Denote by $L\colon x\mapsto x^{-1}F(x)$ the Lang isogeny. The Lang--Steinberg theorem tells us that, for any $g\in G$, there is an $h\in G$ such that $h^{-1}F(h)=g$. Note that, if $g\in G^F$, then $F(h)h^{-1}\in G^F$; via this property one can define a permutation on the conjugacy classes of $G^F$:

\begin{defi}\label{defi:n_F}
Let $g=h^{-1}F(h)\in G^F$ and let $[g]_{G^F}$ be the conjugacy class in $G^F$. Then
$$n_F([g]_{G^F}):=[F(h)h^{-1}]_{G^F}.$$
(Note that this is independent of the choice of $h$.)
\end{defi}

\begin{defi}\label{defi:Sh}
The twisting operator $\mathrm{Sh}_G$ is the linear automorphism on the complex space of class functions of $G^F$ (denoted by $\mathcal{C}(G^F)$) given by composition with $n_F$, that is:
$$\mathrm{Sh}_G(f)=f\circ n_F$$
for $f\in \mathcal{C}(G^F)$.
\end{defi}

\begin{lemm}
The operator $\mathrm{Sh}_G\in \mathrm{End}(\mathcal{C}(G^F))$ is semisimple with eigenvalues being roots of unity. Moreover, $\mathrm{Sh}_G$ preserves the usual inner product on $\mathcal{C}(G^F)$.
\end{lemm}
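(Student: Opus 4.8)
The plan is to use that $\mathrm{Sh}_G$ is by definition composition with the permutation $n_F$ of the finite set of conjugacy classes of $G^F$, and that $n_F$ respects conjugacy class sizes. First I would note that $n_F$ is a permutation of a finite set (Definition~\ref{defi:n_F}), hence has finite order $m$. Since $\mathrm{Sh}_G(f)=f\circ n_F$, iterating gives $\mathrm{Sh}_G^m(f)=f\circ n_F^m=f$ for all $f\in\mathcal{C}(G^F)$, so $\mathrm{Sh}_G^m=\mathrm{id}$. Thus the minimal polynomial of $\mathrm{Sh}_G$ divides $X^m-1$, which is separable over $\mathbb{C}$; therefore $\mathrm{Sh}_G$ is diagonalisable and each eigenvalue is an $m$-th root of unity, giving the first assertion.

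For the inner product, write it as a sum over conjugacy classes,
$$\langle f_1,f_2\rangle=\frac{1}{|G^F|}\sum_{x\in G^F}f_1(x)\overline{f_2(x)}=\sum_{[x]_{G^F}}\frac{f_1(x)\overline{f_2(x)}}{|C_{G^F}(x)|}.$$
Replacing $f_i$ by $\mathrm{Sh}_G(f_i)=f_i\circ n_F$ and re-indexing the sum along the bijection $n_F$, it suffices to prove that $n_F$ preserves centraliser orders, i.e. $|C_{G^F}(x)|=|C_{G^F}(x')|$ whenever $[x']_{G^F}=n_F([x]_{G^F})$. Writing $x=h^{-1}F(h)$ and $x'=F(h)h^{-1}$ one has $x'=hxh^{-1}$, so conjugation by $h$ is an isomorphism of algebraic groups $\phi\colon C_G(x)\xrightarrow{\sim}C_G(x')$. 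A short computation shows $\phi(y)\in G^F$ if and only if $F(y)=x^{-1}yx$; but $x$ is central in $C_G(x)$ and $F$ stabilises $C_G(x)$ (as $F(x)=x$), so for $y\in C_G(x)$ the condition $F(y)=x^{-1}yx$ is simply $F(y)=y$. Hence $\phi$ restricts to a bijection $C_{G^F}(x)\xrightarrow{\sim}C_{G^F}(x')$, and we conclude $\langle\mathrm{Sh}_G(f_1),\mathrm{Sh}_G(f_2)\rangle=\langle f_1,f_2\rangle$.

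Everything here is formal except the last point, which I expect to be the only mild obstacle: verifying that $n_F$ carries a class to a class of the same size, via the observation that $x$ lies in the centre of its own centraliser so that $\mathrm{ad}(g)$ acts trivially there. Alternatively, once the isometry property is established one could recover semisimplicity and the unit-modulus of the eigenvalues from the fact that a finite-order linear operator preserving a positive-definite Hermitian form on a finite-dimensional complex space is unitary, using the finite order of $n_F$ only to upgrade ``unit circle'' to ``root of unity''.
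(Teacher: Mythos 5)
Your proof is correct and takes essentially the same route as the paper's: observe that $\mathrm{Sh}_G$ is a finite-order permutation of the characteristic-function basis indexed by conjugacy classes, which immediately gives semisimplicity with root-of-unity eigenvalues, and then reduce the isometry claim to the fact that $n_F$ preserves class sizes. The one place you go beyond the paper is that you supply a direct proof of this last fact (which the paper cites from Digne--Michel, I.7.2), by showing that conjugation by $h$ gives a bijection $C_{G^F}(x)\xrightarrow{\sim}C_{G^F}(x')$, using that $F$ stabilises $C_G(x)$ and $x$ is central in $C_G(x)$; this is the standard argument and is carried out correctly.
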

\begin{proof}
By definition, $\mathrm{Sh}_G$ is a permutation matrix with respect to the basis consisting of characteristic functions on conjugacy classes, from which the first two properties follow. Now the property of being isometric follows from the fact that (see \cite[I.7.2]{DM_L-function_Book}) $n_F$ preserves the cardinals of conjugacy classes.
\end{proof}

A fundamental property of $\mathrm{Sh}_G$ is the following lemma:

\begin{lemm}\label{lemm:a fundamental lemma}
Let $g\in G^F$. Consider the set of $F$-rational elements in the geometric conjugacy class (i.e.\ the $G$-conjugacy class) of $g$ in $G$; partition this set into $G^F$-conjugacy classes. There is a natural bijection between the resulting $G^F$-classes and the $F$-conjugacy classes of the component group $Z_G(g)/Z_G(g)^{\circ}$, such that, under this bijection the action of $n_{F}$ becomes the multiplication by $g$.
\end{lemm}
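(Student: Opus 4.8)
The plan is to parametrise the $G^F$-conjugacy classes lying in the geometric class of $g$ by a nonabelian cohomology set, and then to read off the action of $n_F$ through this parametrisation. Let $C$ denote the $G$-conjugacy class of $g$ (an $F$-stable locally closed subvariety, since $g\in G^F$), and set $A:=Z_G(g)/Z_G(g)^{\circ}$, a finite group carrying the $F$-action induced from $Z_G(g)$ (note that $Z_G(g)$ is $F$-stable because $g=F(g)$). For $x\in C^F$ pick $h\in G$ with $x=hgh^{-1}$; then $F(x)=x$ forces $h^{-1}F(h)\in Z_G(g)$. I would first check that $x\mapsto[\,h^{-1}F(h)\,]$ is a well-defined map from the set of $G^F$-orbits on $C^F$ to $H^1(F,Z_G(g))$: replacing $h$ by $hc$ with $c\in Z_G(g)$ replaces the cocycle $z:=h^{-1}F(h)$ by the $F$-conjugate $c^{-1}zF(c)$, and replacing $x$ by a $G^F$-conjugate does not alter $z$. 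Then I would verify bijectivity: surjectivity is immediate from Lang--Steinberg (given $z\in Z_G(g)$, choose $h$ with $h^{-1}F(h)=z$ and put $x:=hgh^{-1}\in C^F$), and injectivity is a short computation — if $x=hgh^{-1}$ and $x'=h'g h'^{-1}$ have cocycles related by $h'^{-1}F(h')=c^{-1}(h^{-1}F(h))F(c)$ for some $c\in Z_G(g)$, then $k:=h'c^{-1}h^{-1}$ lies in $G^F$ and conjugates $x$ to $x'$.

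Next I would pass from $H^1(F,Z_G(g))$ to the component group. Since $A$ is finite, $H^1(F,A)$ is exactly the set of $F$-conjugacy classes of $A$. From the exact sequence $1\to Z_G(g)^{\circ}\to Z_G(g)\to A\to 1$ and the triviality of $H^1(F,Z_G(g)^{\circ})$ (Lang--Steinberg applied to the connected group $Z_G(g)^{\circ}$), the induced map $H^1(F,Z_G(g))\to H^1(F,A)$ is injective; for surjectivity one lifts a representative $\bar a\in A$ to some $z\in Z_G(g)$ and corrects $z$ within its $Z_G(g)^{\circ}$-coset by applying Lang--Steinberg to the twisted Steinberg endomorphism $\mathrm{ad}(z)\circ F$ of $Z_G(g)^{\circ}$. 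Composing the two bijections identifies the $G^F$-classes inside $C$ with the $F$-conjugacy classes of $A$.

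Finally I would compute $n_F$ in these terms. For $x=hgh^{-1}\in C^F$ with cocycle $z=h^{-1}F(h)$, writing $x=k^{-1}F(k)$ gives $F(k)k^{-1}=kxk^{-1}=(kh)g(kh)^{-1}\in C^F$, whose associated cocycle is $(kh)^{-1}F(kh)=h^{-1}\bigl(k^{-1}F(k)\bigr)F(h)=h^{-1}xF(h)=g\,h^{-1}F(h)=gz$. Hence $n_F$ sends the class of $z$ to the class of $gz$. Since $g$ commutes with all of $Z_G(g)$ it is central there, so left multiplication by $g$ — equivalently by its image $\bar g\in A$ — is well defined on $F$-conjugacy classes, and this is precisely the multiplication by $g$ asserted in the statement. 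The one step that is not pure bookkeeping with cocycles is the bijectivity $H^1(F,Z_G(g))\xrightarrow{\sim}H^1(F,A)$, in particular the correction argument with the twisted Frobenius; that is where I would concentrate, although it is entirely standard (cf.\ \cite{DM_L-function_Book}).
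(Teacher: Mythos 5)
Your argument is the standard proof that the paper cites (to \cite[IV.1.1]{DM_L-function_Book}, \cite[3.21]{DM1991}) rather than reproduces, and both the overall strategy (parametrising $G^F$-orbits on $C^F$ by $H^1(F,Z_G(g))$, passing to the component group, then tracking $n_F$ through the cocycle) and the computations are sound. One small slip in the middle step: you have the roles of injectivity and surjectivity for $H^1(F,Z_G(g))\to H^1(F,A)$ reversed. Surjectivity is the easy direction (lift $\bar a$ to any $z\in Z_G(g)$; no correction is needed, since $[z]$ already maps to $[\bar a]$). It is injectivity that requires the twisted Lang--Steinberg argument: if $z,z'$ become $F$-conjugate in $A$, one first $F$-conjugates $z$ by a lift of $\bar a$ to reduce to the case $z'=zc$ with $c\in Z_G(g)^{\circ}$, and then solves $b^{-1}zF(b)=zc$ for $b\in Z_G(g)^{\circ}$ by applying Lang--Steinberg to the Steinberg endomorphism $\mathrm{ad}(z)\circ F$ of the connected group $Z_G(g)^{\circ}$. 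With that correction the proof is complete, and the final computation $z\mapsto gz$ together with the observation that $g$ (hence $\bar g$) is central in $Z_G(g)$ gives exactly the stated action of $n_F$.
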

\begin{proof}
See \cite[IV.1.1]{DM_L-function_Book} or \cite[3.21]{DM1991}.
\end{proof}

From the above it follows immediately that:

\begin{coro}
The order of $\mathrm{Sh}_G$ is the smallest $i$ such that, for any $g\in G^F$, $g^i=1$ in all the $F$-conjugacy classes of $Z_G(g)/Z_G(g)^{\circ}$.
\end{coro}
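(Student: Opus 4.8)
The plan is to reduce the statement to the combinatorics of the permutation $n_F$ and then feed in Lemma~\ref{lemm:a fundamental lemma}. First I would record that, by construction, $\mathrm{Sh}_G$ is the linear automorphism induced by the permutation $n_F$ on the basis of $\mathcal{C}(G^F)$ consisting of characteristic functions of conjugacy classes of $G^F$. Since there are only finitely many such classes, $n_F$ is a permutation of a finite set and hence has finite order, and for any $i\in\mathbb{Z}_{>0}$ we have $\mathrm{Sh}_G^i=\mathrm{id}_{\mathcal{C}(G^F)}$ if and only if $n_F^i$ is the identity permutation, i.e.\ if and only if $n_F^i$ fixes every conjugacy class $[g]_{G^F}$. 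Thus the order of $\mathrm{Sh}_G$ equals the order of $n_F$ as a permutation.

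Next I would observe that $n_F$ respects the partition of the set of conjugacy classes of $G^F$ according to the ambient geometric ($G$-)class: if $g=h^{-1}F(h)\in G^F$ then $n_F([g]_{G^F})=[F(h)h^{-1}]_{G^F}=[hgh^{-1}]_{G^F}$, which lies in the geometric class of $g$. Consequently $n_F^i$ fixes all conjugacy classes of $G^F$ precisely when, for every $g\in G^F$, the permutation $n_F^i$ fixes each of the (finitely many) $G^F$-classes contained in the geometric class of $g$.

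Now I would apply Lemma~\ref{lemm:a fundamental lemma}: for a fixed $g\in G^F$ it gives a bijection $\varphi$ from the set of $G^F$-classes inside the geometric class of $g$ onto the set of $F$-conjugacy classes of $A:=Z_G(g)/Z_G(g)^{\circ}$, intertwining the action of $n_F$ with multiplication by the image of $g$. Since $\varphi$ conjugates the restriction of $n_F$ to a genuine permutation, multiplication by $g$ is itself a permutation of the $F$-classes of $A$, and iterating gives that $n_F^i$ corresponds under $\varphi$ to multiplication by $g^i$. Hence $n_F^i$ fixes all $G^F$-classes in the geometric class of $g$ exactly when multiplication by $g^i$ is trivial on the $F$-conjugacy classes of $A$, i.e.\ when $g^i=1$ in all the $F$-conjugacy classes of $Z_G(g)/Z_G(g)^{\circ}$. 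Combining this with the reduction of the previous paragraph, $\mathrm{Sh}_G^i=\mathrm{id}$ holds if and only if this condition holds for every $g\in G^F$, and taking the least such $i$ yields the corollary.

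I do not expect a genuine obstacle here, as the statement is essentially a direct unwinding of Lemma~\ref{lemm:a fundamental lemma}. The only points demanding a little care are: verifying that a single application of $n_F$ becoming ``multiplication by $g$'' upgrades to $n_F^i$ becoming ``multiplication by $g^i$'' compatibly with the bijection $\varphi$ (so that the per-$g$ condition is exactly $g^i=1$ in every $F$-conjugacy class), and noting at the outset the finiteness that makes the notion of order meaningful. Both are routine.
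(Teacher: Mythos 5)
Your argument is correct and is exactly the intended unwinding: the paper offers no written proof beyond the phrase ``From the above it follows immediately,'' and the ``above'' is precisely Lemma~\ref{lemm:a fundamental lemma}, applied class-by-class as you do. The only point worth noting is that the per-$g$ condition you derive is automatically constant on geometric classes (since ``$n_F^i$ fixes every $G^F$-class inside the geometric class of $g$'' does not depend on the chosen representative), which you implicitly use and which makes the ``for any $g\in G^F$'' quantifier in the statement harmless.
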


\begin{lemm}\label{lemm:triviality for commutative groups}
If $G$ is commutative, then $n_F$, and hence $\mathrm{Sh}_G$, are the identity maps.
\end{lemm}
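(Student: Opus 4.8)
The plan is simply to unwind the definitions and use commutativity directly. First I would note that since $G^F$ is abelian, every conjugacy class $[g]_{G^F}$ is the singleton $\{g\}$, so it is enough to show that $n_F$ fixes each element $g\in G^F$. Given such a $g$, apply the Lang--Steinberg theorem to choose $h\in G$ with $h^{-1}F(h)=g$. The key (and essentially only) step is the trivial identity that, because $G$ is commutative, $h^{-1}$ and $F(h)$ commute, so that
$$F(h)h^{-1}=h^{-1}F(h)=g.$$
By Definition~\ref{defi:n_F} this yields $n_F([g]_{G^F})=[F(h)h^{-1}]_{G^F}=[g]_{G^F}$, i.e.\ $n_F$ is the identity permutation on conjugacy classes. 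Hence $\mathrm{Sh}_G(f)=f\circ n_F=f$ for every $f\in\mathcal{C}(G^F)$, which is the assertion.

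An alternative route, which I would mention as a remark rather than carry out in detail, is to invoke Lemma~\ref{lemm:a fundamental lemma}: when $G$ is commutative one has $Z_G(g)=G$ for every $g$, and since $G$ is connected the component group $Z_G(g)/Z_G(g)^{\circ}$ is trivial; thus there is a single $F$-conjugacy class in it and the action of $n_F$, being multiplication by $g$ on that (trivial) group, is necessarily the identity.

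There is no real obstacle in this proof; the computation is immediate. The only point worth keeping in mind is that $n_F$ is independent of the choice of $h$, but this is already recorded in Definition~\ref{defi:n_F}, so it may be taken for granted here.
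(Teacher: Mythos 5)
Your proof is correct and takes the same approach as the paper, which simply states that the lemma ``follows from the definition''; you have merely spelled out the one-line computation $F(h)h^{-1}=h^{-1}F(h)=g$ that the paper leaves implicit. The alternative argument via Lemma~\ref{lemm:a fundamental lemma} is also valid (using that $G$ is connected so $Z_G(g)/Z_G(g)^{\circ}$ is trivial), but as you note it is heavier machinery than the situation requires.
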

\begin{proof}
This follows from the definition.
\end{proof}

\begin{lemm}\label{lemm:triviality on ss classes}
The permutation $n_{F}$ is the identity on the semisimple classes.
\end{lemm}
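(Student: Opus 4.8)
The plan is to use Lemma~\ref{lemm:a fundamental lemma} to reduce the statement to a purely group-theoretic fact about component groups of centralisers of semisimple elements. Let $s \in G^F$ be semisimple. By Lemma~\ref{lemm:a fundamental lemma}, the $F$-rational classes inside the geometric class of $s$ are parametrised by the $F$-conjugacy classes of $A := Z_G(s)/Z_G(s)^\circ$, and under this parametrisation the action of $n_F$ is multiplication by (the image of) $s$ in $A$. Hence it suffices to show that $s$ maps to the identity element of $A$, i.e.\ that $s \in Z_G(s)^\circ$. Equivalently, $n_F$ fixes the class of $s$ as soon as $s$ lies in the connected centraliser $Z_G(s)^\circ$, and then multiplication by $s$ is the trivial translation of $A$, so $n_F$ fixes \emph{every} $F$-rational class in the geometric class of $s$ (not merely the class of $s$ itself).

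The key step is therefore the classical fact that the centraliser of a semisimple element in a connected reductive group is connected whenever the group has connected centre, and more generally that a semisimple element always lies in the identity component of its own centraliser. First I would invoke the standard structure theory: $Z_G(s)^\circ$ is reductive and contains a maximal torus $T$ of $G$ with $s \in T$ (any semisimple element lies in a maximal torus, and that torus centralises $s$ and is connected), so $s \in T \subseteq Z_G(s)^\circ$. This gives $s \in Z_G(s)^\circ$ with no hypothesis on the centre at all, which is exactly what is needed. I would cite a standard reference (e.g.\ \cite{DM1991} or Springer's book) for the statement that every semisimple element of a connected linear algebraic group lies in a maximal torus.

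Assembling: given semisimple $s \in G^F$, pick a maximal torus $T \ni s$; then $s \in T \subseteq Z_G(s)^\circ$, so the image of $s$ in $A = Z_G(s)/Z_G(s)^\circ$ is trivial; by Lemma~\ref{lemm:a fundamental lemma} the action of $n_F$ on the $F$-rational classes in $[s]_G$ is multiplication by this trivial element, hence the identity; in particular $n_F([s]_{G^F}) = [s]_{G^F}$. The only mild subtlety — the main thing to be careful about rather than a genuine obstacle — is making sure the bijection of Lemma~\ref{lemm:a fundamental lemma} sends the class of $s$ to the identity $F$-class of $A$ under the normalisation used there (so that "multiplication by $g$" being trivial really does pin down the class of $s$); this is immediate from how the bijection is constructed, since $s = h^{-1}F(h)$ with $h$ chosen so that the marked base class corresponds to $1 \in A$. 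I do not expect any serious difficulty beyond bookkeeping.
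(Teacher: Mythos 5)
Your proof is correct, but it takes a genuinely different route from the paper's.

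The paper's proof is shorter and more elementary: it takes an $F$-stable maximal torus $S$ with $s \in S^F$ (citing \cite{DM1991}), applies the Lang--Steinberg theorem inside the connected group $S$ to write $s = h^{-1}F(h)$ with $h \in S$, and observes that commutativity of $S$ forces $F(h)h^{-1} = h^{-1}F(h) = s$; since $n_F$ is independent of the choice of $h$, this immediately gives $n_F([s]_{G^F}) = [s]_{G^F}$. This is exactly what Lemma~\ref{lemm:triviality for commutative groups} packages, invoked via a restriction to $S^F$. Your argument instead routes through the heavier Lemma~\ref{lemm:a fundamental lemma}: you show $s \in Z_G(s)^\circ$ (because $s$ lies in a connected maximal torus $T$, and $T \subseteq Z_G(s)$), so the image of $s$ in the component group $A = Z_G(s)/Z_G(s)^\circ$ is trivial, whence multiplication by $s$ on the $F$-conjugacy classes of $A$ is the identity. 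Both proofs ultimately rest on the same fact --- a semisimple element lies in a maximal torus --- but the paper only needs $F$-stability of that torus to apply Lang--Steinberg, while you need no $F$-stability, only connectedness. Your route buys the marginally stronger statement that $n_F$ fixes \emph{every} rational class in the geometric class of $s$ (though these are all semisimple anyway, so the paper's lemma already covers them). One small remark on your closing worry: the "normalisation" issue you flag is vacuous here --- once $s \in Z_G(s)^\circ$, multiplication by the image of $s$ in $A$ is the identity map on $A$, so it fixes every $F$-conjugacy class regardless of which class corresponds to $[s]_{G^F}$ under the bijection; you do not need to track where the base class goes.
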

\begin{proof}
Let $s\in G^F$ be semisimple, then $s$ is contained in an $F$-stable maximal torus $S$ of $G$ (see e.g.\  \cite[3.16]{DM1991}). The assertion follows from Lemma~\ref{lemm:triviality for commutative groups} by restricting $n_F$ to $S^F$.
\end{proof}

\section{Groups over local rings}\label{sec:Group over local ring}

Let $\mathcal{O}:=\mathbb{F}_q[[\pi]]$, and let $\mathbb{G}$ be a connected reductive group scheme over $\mathcal{O}$. Note that every smooth representation of $\mathbb{G}(\mathcal{O})$ factors through $\mathbb{G}(\mathcal{O}_r)$ for some $r\in \mathbb{Z}_{>0}$, where $\mathcal{O}_r:=\mathbb{F}_q[[\pi]]/\pi^r$. From now on we fix an arbitrary such $r$.

\vspace{2mm} We briefly recall the settings in \cite{Lusztig2004RepsFinRings}: There is a connected algebraic group $G_r$ (Weil restriction), defined over $\mathbb{F}_q$, such that 
$$G_r(\overline{\mathbb{F}}_q)\cong \mathbb{G}(\overline{\mathbb{F}}_q[[\pi]]/\pi^r) \ \ \text{and}\ \ G_r^F:=G_r(\overline{\mathbb{F}}_q)^F\cong\mathbb{G}(\mathcal{O}_r),$$
where $F$ is the associated geometric Frobenius endomorphism. For $i\in\mathbb{Z}_{[1,r-1]}$, the reduction map modulo $\pi^i$ induces a surjective morphism between algebraic groups 
$$\rho_{r,i}\colon G_r\longrightarrow G_i;$$
we denoted the kernel group (congruence group) by $G^i$. Note that $G_1$ is a connected reductive group over $\overline{\mathbb{F}}_q$ and we have a semi-direct product 
$$G_r\cong G_1\ltimes G^1,$$
which is a Levi decomposition. Every closed subgroup scheme of $\mathbb{G}\times_{\mathcal{O}}\overline{\mathbb{F}}_q[[\pi]]/\pi^r$ corresponds to a closed subgroup of $G_r$; for such a subgroup $H\subseteq G_r$, we shall use similar notations $H_i$ and $H^i$. As before, $L\colon g\mapsto g^{-1}F(g)$ denotes the Lang isogeny associated to $F$.  In an algebraic group, we use the conjugation notation $x^y:=y^{-1}xy=:{^{y^{-1}}x}$.

\vspace{2mm} Throughout this paper we would write $G:=G_r$ when there is no confusion.

\vspace{2mm}  An important tool for studying the representations of $G^F=\mathbb{G}(\mathcal{O}_r)$, when $r\geq2$, is the notion of orbits. In the remaining part of this section we assume that $r\geq 2$, and that the Lie algebra $\mathfrak{g}:=\mathrm{Lie}(G_1)$ admits a non-degenerate $G_1$-equivariant bilinear form $\mu\colon \mathfrak{g}\times\mathfrak{g}\rightarrow\overline{\mathbb{F}}_q$ defined over $\mathbb{F}_q$. (This requires $\mathrm{char}(\mathbb{F}_q)$ is not too small unless $\mathbb{G}=\mathrm{GL}_n$; see e.g.\ the discussions in \cite[2]{Let2005book}.)  We fix such a $\mu$ and fix a non-trivial character $\phi\colon \mathbb{F}_q\rightarrow \overline{\mathbb{Q}}_{\ell}^{\times}$. Note that $G^{r-1}$ can be viewed as the additive group of $\mathfrak{g}$; under this viewpoint, the conjugation action of $G$ on $G^{r-1}$ is translated to be the adjoint action of $G_1$ on $\mathfrak{g}$. If $\sigma$ is an irreducible character of $G^F$, then by Clifford's theorem (see \cite[6.2]{Isaacs_CharThy_Book}) we have
$$\sigma|_{(G^{r-1})^F}= e\cdot\left(\sum_{\chi\in\Omega(\sigma)} \chi \right),$$
where $e$ is a positive integer and $\Omega(\sigma)$ is a $G_1^F$-orbit of irreducible characters of the additive group $\mathfrak{g}^F$, so we get a map $\Omega\colon \mathrm{Irr}(G^F)\rightarrow \mathbb{G}(\mathbb{F}_q)\backslash\mathrm{Irr}(\mathfrak{g}^F)$. Taking the dual of $\Omega$ via $\phi(\mu(-,-))$ we get another map:
$$\Omega'\colon \mathrm{Irr}(G^F)\longrightarrow \mathbb{G}(\mathbb{F}_q)\backslash \mathfrak{g}^F.$$
We call $\Omega'(\sigma)$ the orbit of $\sigma$; of particular interest is the case that $\Omega'(\sigma)$ is nilpotent, regular, or semisimple. 

\vspace{2mm} Constructing irreducible characters of $G^F$ using orbits attracts many attentions; see e.g.\ the recent works \cite{Stasinski_Stevens_2016_regularRep} and \cite{Krakovski_Onn_Singla_regularchar_2018}, and the references therein. Note that the notion of orbits naturally extends to the whole space $\mathcal{C}(G^F)$: If $f=\oplus_i{\lambda_i}\sigma_i$ is a class function with irreducible components $\sigma_i$, then the formal sum $\Omega'(f):=\sum_i\lambda_i\Omega'(\sigma_i)$ gives a surjective linear map
$$\Omega'\colon \mathcal{C}(G^F)\longrightarrow \mathbb{C}[\mathbb{G}(\mathbb{F}_q)\backslash \mathfrak{g}^F],$$
where $\mathbb{C}[\mathbb{G}(\mathbb{F}_q)\backslash \mathfrak{g}^F]$ denotes the space of formal sums of adjoint orbits.

\begin{prop}\label{prop:Sh preserves orbits}
Let $f$ be a class function of $G_r^F$, then $\Omega'(f)=\Omega'(\mathrm{Sh}_G(f))$.
\end{prop}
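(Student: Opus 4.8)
The plan is to reduce the claim to the fundamental bijection of Lemma~\ref{lemm:a fundamental lemma}, applied to the congruence subgroup $G^{r-1}$, together with the observation that $\mathrm{Sh}_G$ and the orbit map $\Omega'$ are both built from the permutation $n_F$. Since $\Omega'$ is linear and $\mathrm{Sh}_G$ is linear, and since $\mathrm{Irr}(G^F)$ is a basis of $\mathcal{C}(G^F)$, it suffices to check $\Omega'(\sigma)=\Omega'(\mathrm{Sh}_G(\sigma))$ for $\sigma\in\mathrm{Irr}(G^F)$. Actually it is cleaner to work on the other side of duality: because $\Omega$ and $\Omega'$ differ only by the fixed $G_1$-equivariant isomorphism $\mathfrak{g}^F\xrightarrow{\sim}\mathrm{Irr}(\mathfrak{g}^F)$, $x\mapsto\phi(\mu(x,-))$, it is enough to prove $\Omega(\sigma)=\Omega(\mathrm{Sh}_G(\sigma))$ as $G_1^F$-orbits in $\mathrm{Irr}(\mathfrak{g}^F)$.

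First I would unwind how $n_F$ interacts with the Clifford decomposition. Write $\sigma|_{(G^{r-1})^F}=e\sum_{\chi\in\Omega(\sigma)}\chi$. The key point is that $n_F$ on $G^F$ restricts compatibly to the abelian normal subgroup $(G^{r-1})^F\cong\mathfrak{g}^F$: if $g=h^{-1}F(h)\in G^F$ lies in $(G^{r-1})^F$, then since $G^{r-1}$ is $F$-stable and connected (indeed unipotent abelian), by Lang--Steinberg one may take $h\in G^{r-1}$ as well, and then $F(h)h^{-1}\in(G^{r-1})^F$ too; but $G^{r-1}$ is abelian, so by Lemma~\ref{lemm:triviality for commutative groups} (applied to the connected commutative group $G^{r-1}$) we get $F(h)h^{-1}=h^{-1}F(h)=g$. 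Hence $n_F$ acts as the \emph{identity} on the conjugacy classes of $G^F$ that meet $(G^{r-1})^F$ — more precisely, $n_F$ fixes every element of $(G^{r-1})^F$ as a point, i.e.\ $n_F$ is the identity on $\mathfrak{g}^F$ itself. The remaining content is to see that $n_F$ on all of $G^F$ nevertheless permutes classes in a way that leaves each restriction-to-$(G^{r-1})^F$ orbit $\Omega(\sigma)$ stable.

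The mechanism for the last point is Lemma~\ref{lemm:a fundamental lemma} combined with the Levi decomposition $G_r\cong G_1\ltimes G^1$ and the induced decomposition $G_r^F\cong G_1^F\ltimes (G^1)^F$. Concretely: for $\sigma\in\mathrm{Irr}(G^F)$, $n_F$ sends $[\sigma]$ to some $[\sigma']$ obtained by the bijection with $F$-conjugacy classes in the component group $Z_G(g)/Z_G(g)^\circ$ for $g$ a representative of $\sigma$ in the relevant sense; but $\mathrm{Sh}_G$ being an isometric permutation of $\mathrm{Irr}(G^F)$, what I really need is that $\mathrm{Sh}_G(\sigma)$ and $\sigma$ have the same restriction to $(G^{r-1})^F$ up to the $G_1^F$-action — equivalently the same orbit $\Omega$. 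I would argue this by pairing: for $x\in\mathfrak{g}^F=(G^{r-1})^F$ and any $h\in G$ with $h^{-1}F(h)\in G^F$, the multiplicity of the character $\phi(\mu(x,-))$ in $\sigma\circ n_F$ restricted to $(G^{r-1})^F$ equals the multiplicity of $\phi(\mu(\mathrm{Ad}(\bar h)x,-))$ in $\sigma$, where $\bar h\in G_1$ is the image of $h$; since $\mathrm{Ad}(\bar h)$ preserves $G_1^F$-orbits precisely when $\bar h$ normalises the relevant structure, and since $n_F$ acts trivially on the congruence layer by the previous paragraph, the orbit is unchanged.

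The main obstacle I anticipate is making the bookkeeping in the last paragraph fully rigorous: one must track how the element $h$ realising the Lang isogeny for a class meeting $G^{r-1}$ versus a general class of $G^F$ descends to $G_1$, and confirm that the ambiguity is exactly a $G_1^F$-translation — so that it disappears after passing to $G_1^F$-orbits. I expect this is where a careful use of Lemma~\ref{lemm:a fundamental lemma} (the component group $Z_G(g)/Z_G(g)^\circ$ and the twist-by-$g$ description) is essential, and where one should be careful that $g\in(G^{r-1})^F$ is central enough in $G^{r-1}$ that its own contribution to the twist is trivial. Once that is in place, linearity of $\Omega'$ extends the statement from $\mathrm{Irr}(G^F)$ to all of $\mathcal{C}(G^F)$, and applying the duality via $\phi(\mu(-,-))$ converts $\Omega$-invariance into the asserted $\Omega'(f)=\Omega'(\mathrm{Sh}_G(f))$.
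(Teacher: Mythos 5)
Your second paragraph contains the paper's entire idea: $n_F$ fixes every element of $(G^{r-1})^F$, because $G^{r-1}$ is connected and abelian so one may take $h\in G^{r-1}$ in the Lang equation $h^{-1}F(h)=g$, and then $F(h)h^{-1}=h^{-1}F(h)=g$. From this, for \emph{any} class function $f$ one gets $\mathrm{Sh}_G(f)(g)=f(n_F(g))=f(g)$ for all $g\in(G^{r-1})^F$, i.e.\ $\mathrm{Sh}_G(f)|_{(G^{r-1})^F}=f|_{(G^{r-1})^F}$. Combined with the fact that $\Omega'$ is read off from this restriction, the proposition follows. That is the whole of the paper's proof, and at that point there is no ``remaining content''.

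Your third and fourth paragraphs go astray. First, once $\mathrm{Sh}_G(\sigma)$ and $\sigma$ are known to have \emph{equal} restrictions to $(G^{r-1})^F$ (not merely equal up to $G_1^F$-conjugation, which is what you ask for --- a weaker and confused target), nothing about how $n_F$ moves conjugacy classes outside $(G^{r-1})^F$ can affect $\Omega$; so the appeal to Lemma~\ref{lemm:a fundamental lemma}, the Levi decomposition, and the component group $Z_G(g)/Z_G(g)^{\circ}$ is not needed. Second, the assertion that ``$\mathrm{Sh}_G$ [is] an isometric permutation of $\mathrm{Irr}(G^F)$'' is not justified and is not true in general: $\mathrm{Sh}_G$ permutes the characteristic functions of $G^F$-conjugacy classes, which does not imply it permutes irreducible characters, so phrases like ``$n_F$ sends $[\sigma]$ to some $[\sigma']$'' for $\sigma\in\mathrm{Irr}(G^F)$ do not parse. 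Finally, your last paragraph concedes the argument is unfinished. So: the key observation is present and correct, but you do not recognise that it already suffices, and the extra machinery you reach for is both unnecessary and, as written, incorrect and incomplete.
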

\begin{proof}
By construction, $\Omega'(f)$ only depends on the restriction of $f$ to $(G^{r-1})^F$; meanwhile, we have $\mathrm{Sh}_G(f)\mid_{(G^{r-1})^F}=f\mid_{(G^{r-1})^F}$ by Lemma~\ref{lemm:triviality for commutative groups}.
\end{proof}

\section{Deligne--Lusztig constructions}\label{sec:higher DL}

In \cite{Lusztig1979SomeRemarks} and \cite{Lusztig2004RepsFinRings} Lusztig gives the Deligne--Lusztig construction for $\mathbb{G}(\mathcal{O}_r)$. Recently, this work has been studied and generalised along various aspects; see e.g.\ \cite{Sta2009Unramified}, \cite{Sta2011ExtendedDL}, \cite{Chen_2017_InnerProduct}, \cite{Chen_2019_flag_orbit}, \cite{ChanetIvanov_CohomRepParahoricGp}. In this section we study the representations of $\mathbb{G}(\mathcal{O}_r)$ via Deligne--Lusztig theory in two directions, one through the twisting operators and one through the Springer fibres.

\vspace{2mm} For $r=1$, the relations between twisting operators and Deligne--Lusztig theory have been investigated extensively. Here we give a start for general $r\geq 1$. We will work in the setting of \cite{Chen_2017_InnerProduct}, and try to extend the classical result asserting that Deligne--Lusztig character values are invariant under the twisting operator; when $r=1$, this was obtained by Asai \cite{Asai_twisting_op_classicalgroup} for classical groups with connected centre and by Digne--Michel \cite{DM_L-function_Book} for Lusztig inductions in good characteristic. To properly state the result we also need to extend a basic property of semisimple centralisers.

\vspace{2mm} Let $\mathbf{B}=\mathbf{T}\ltimes\mathbf{U}$ be a Borel subgroup of $\mathbb{G}\times_{\mathcal{O}}\overline{\mathbb{F}}_q[[\pi]]/\pi^r$, where $\mathbf{T}$ is a maximal torus and $\mathbf{U}$ is the unipotent radical. Let $B_r, U_r, T_r\subseteq G_r$ be the corresponding algebraic groups; similar to $G=G_r$, we may omit the subscript $r$ and denote them by $B,U,T$. We only concern the case that $T$ is $F$-stable (i.e.\ $FT=T$). Let $\ell$ be a prime not equal to $\mathrm{char}(\mathbb{F}_q)$, and fix a field isomorphism $\mathbb{C}\cong \overline{\mathbb{Q}}_{\ell}$; in particular, we can view $\mathrm{Sh}_G$ as an operator on the $\overline{\mathbb{Q}}_{\ell}$-space of class functions.

\vspace{2mm} For $b\in\mathbb{Z}_{[0,r]}$, write $U^{b,r-b}:=U^b(U^-)^{r-b}$, where $U^-$ denotes the algebraic group corresponding to the opposite of $\mathbf{U}$. Note that $G^F$ (resp.\ $T^F$) acts on $L^{-1}(FU^{b,r-b})$ by left (resp.\ right) translation. 
\begin{defi}
For $\theta\in \mathrm{Hom}(T^F,\overline{\mathbb{Q}}_{\ell})$, the corresponding higher Deligne--Lusztig representation of $G^F$ at page $b$ is the virtual representation
$$R_{T,U,b}^{\theta}:=\sum_i(-1)^i H^i_c (L^{-1}(FU^{b,r-b}),\overline{\mathbb{Q}}_{\ell})_{\theta},$$
where $H_c^i(-,\overline{\mathbb{Q}}_{\ell})_{\theta}$ denotes the $\theta$-isotypical part of the compactly supported $i$-th $\ell$-adic cohomology group.
\end{defi}

\begin{remark}
When $b=0$ or $r$, $R_{T,U,b}^{\theta}$ is the representation constructed by Lusztig in \cite{Lusztig2004RepsFinRings}. When $r$ is even and $b=r/2$, $R_{T,U,b}^{\theta}$ is isomorphic to the representation constructed by G\'erardin in \cite{Gerardin1975SeriesDiscretes}. When $B$ is $F$-stable, $R_{T,U,b}^{\theta}\cong \mathrm{Ind}_{T^F(U^{b,r-b})^F}^{G^F}\theta$.
\end{remark}

Let $\mathfrak{R}_{T,U,b}^{\theta}$ be the character of $R_{T,U,b}^{\theta}$.

\begin{thm}\label{thm:main result}
Let $g=su\in G^F$, where $s$ is the semisimple part and $u$ is the unipotent part. Then $u\in Z_{G}(s)^{\circ}$, and if moreover $u\in Z_{Z_{G}(s)^{\circ}}(u)^{\circ}$, then
$$\mathrm{Sh}_G (\mathfrak{R}_{T,U,b}^{\theta})(g)=\mathfrak{R}_{T,U,b}^{\theta} (g).$$
\end{thm}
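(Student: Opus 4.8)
The plan is to follow the classical strategy of Asai and Digne--Michel, adapted to the higher setting of \cite{Chen_2017_InnerProduct}, in two stages: first establish the stated centraliser facts, then use a character formula to reduce the twisting invariance to a statement about unipotent parts that these facts control.

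\textbf{Step 1: the centraliser assertions.} I would first show $u\in Z_G(s)^{\circ}$. Here $s$ is semisimple in $G=G_r$, and the congruence structure $G_r\cong G_1\ltimes G^1$ is the key: writing $s=s_1 s'$ with $s_1\in G_1$ and $s'\in G^1$, the centraliser $Z_G(s)$ fibres over $Z_{G_1}(s_1)$ (which is connected reductive since $G_1$ is connected reductive and $s_1$ is semisimple), with unipotent, hence connected, congruence kernel; so $Z_G(s)$ is connected. Then $u$, being the unipotent part of $g=su$ and commuting with $s$, lies in $Z_G(s)=Z_G(s)^{\circ}$. The second assertion $u\in Z_{Z_G(s)^{\circ}}(u)^{\circ}$ is \emph{not} automatic --- it is the genuine hypothesis of the theorem --- so I would only need to remark that by the same fibration argument $Z_{Z_G(s)^{\circ}}(u)$ is an extension of a reductive-group centraliser by a connected unipotent group, so its component group is exactly that of the reductive part, which makes the hypothesis a mild and checkable condition (e.g.\ automatic for $\mathrm{GL}_n$, or in good characteristic by Steinberg's connectedness results).

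\textbf{Step 2: the character formula and the reduction.} I would invoke the generalised Green function / Deligne--Lusztig character formula for $\mathfrak{R}_{T,U,b}^{\theta}$ available in the setting of \cite{Chen_2017_InnerProduct}, which expresses $\mathfrak{R}_{T,U,b}^{\theta}(g)$ in terms of the semisimple part $s$, a sum over $F$-stable maximal tori (or $U$-data) inside $Z_G(s)^{\circ}$ containing $s$, and higher Green functions evaluated at the unipotent part $u$ inside $Z_G(s)^{\circ}$. Now apply Lemma~\ref{lemm:a fundamental lemma}: since $s$ is semisimple, $n_F$ fixes it (Lemma~\ref{lemm:triviality on ss classes}), so $\mathrm{Sh}_G(\mathfrak{R}^{\theta}_{T,U,b})(g)$ is computed from the same formula but with $u$ replaced by $n_F^{Z_G(s)^{\circ}}(u)$ --- here one uses that the fundamental lemma is functorial for the inclusion $Z_G(s)^{\circ}\hookrightarrow G$, so twisting in $G$ restricts to twisting in $Z_G(s)^{\circ}$ on the $u$-coordinate. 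But again $u$ is unipotent, hence semisimple-free, and more to the point the hypothesis $u\in Z_{Z_G(s)^{\circ}}(u)^{\circ}$ says precisely that $u$ lies in the identity component of its own centraliser in $Z_G(s)^{\circ}$, so by Lemma~\ref{lemm:a fundamental lemma} applied \emph{inside} $Z_G(s)^{\circ}$ the action of $n_F$ on the relevant class of $u$ is multiplication by an element of the trivial component group, i.e.\ trivial. Hence the two character values coincide.

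\textbf{Main obstacle.} The delicate point is Step 2: making precise the character formula for higher Deligne--Lusztig representations in a form that genuinely isolates the unipotent part inside $Z_G(s)^{\circ}$, and verifying the compatibility (functoriality in $Z_G(s)^{\circ}\hookrightarrow G$) of the bijection in Lemma~\ref{lemm:a fundamental lemma}. For $r=1$ this is the content of Asai's and Digne--Michel's arguments; for $r\geq 2$ one must check that the higher Green functions behave like ordinary Green functions of the reductive quotient under these operations, and that the congruence kernels contribute nothing to $n_F$ (which follows from Lemma~\ref{lemm:triviality for commutative groups}, since each graded piece of the kernel is abelian). I expect the bulk of the work, and the place where the hypothesis on $u$ is actually used, to be in justifying that the higher Green function term is unchanged when $u$ is replaced by its $n_F$-twist in $Z_G(s)^{\circ}$, which is why the condition $u\in Z_{Z_G(s)^{\circ}}(u)^{\circ}$ cannot be dropped.
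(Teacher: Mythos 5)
Your high-level plan (first establish the centraliser facts, then use a character formula to reduce twist-invariance to a statement about the unipotent part inside $Z_G(s)^\circ$) is the same general shape as the paper's proof, and the role you assign to Lemma~\ref{lemm:a fundamental lemma} and the Green-function formula is correct. However, there are two concrete problems.

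In Step~1 you claim that $Z_{G_1}(s_1)$ is connected ``since $G_1$ is connected reductive and $s_1$ is semisimple,'' and hence $Z_G(s)$ is connected. This is false in general: centralisers of semisimple elements in a connected reductive group need only be connected when the derived group is simply connected (Steinberg), and the theorem is stated for arbitrary connected reductive $\mathbb{G}$. The correct (and weaker) input is that a unipotent element commuting with $s$ lies in $Z_{G_1}(s_1)^{\circ}$, cf.\ \cite[2.5]{DM1991}. Similarly, the assertion that the ``congruence kernel'' part is ``unipotent, hence connected'' is not a valid deduction: closed subgroups of unipotent groups over $\overline{\mathbb{F}}_q$ can be disconnected (e.g.\ $\mathbb{F}_p\subset\mathbb{G}_a$). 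The paper has to do genuine work here: after conjugating so that $s^c\in T_1$, it decomposes $u^c=u_1u_2$ with $u_2=tv$ via the Iwahori decomposition, and then uses the root-space decomposition $v=\prod_\alpha v_\alpha$ to identify $Z_{G^1}(s^c)$ with an affine space (each factor is either constrained to be $1$ or unconstrained according to whether $\alpha(s^c)\neq1$ or $=1$), hence connected.

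In Step~2 you propose to conclude by applying Lemma~\ref{lemm:a fundamental lemma} ``inside $Z_G(s)^{\circ}$'' to show that $n_F$ acts trivially on the class of $u$, appealing to a functoriality of the fundamental bijection for the inclusion $Z_G(s)^{\circ}\hookrightarrow G$. The subtlety you are skipping is that when one computes the Jordan decomposition of a representative of $n_F([g])$, the unipotent part is $^{s'}(F(u')u'^{-1})$, not $F(u')u'^{-1}$: the $s'$-conjugation is unavoidable, and $s'\in Z_G(s)^\circ$ is \emph{not} $F$-rational (it satisfies $s'^{-1}F(s')=s$). Lemma~\ref{lemm:a fundamental lemma} together with the hypothesis does give $F(u')u'^{-1}={}^z u$ with $z\in(Z_G(s)^\circ)^F$, so $^{s'}(F(u')u'^{-1})={}^{s''}u$ with $s''=s'z$; but $^{s''}u$ is only \emph{geometrically} conjugate to $u$, and the Green function $Q$ is a class function for $(Z_G(s)^\circ)^F$-conjugacy only. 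The paper resolves this by first using the class-function property to normalize $s''$ into ${}^h T_1$, and then proving the equality $Q(u,\tau^{-1})=Q(^{s''}u,\tau^{-1})$ directly via a Lefschetz point-counting argument, where variable changes $x\mapsto s''y$ and $x\mapsto ys''$ and the identity $s''^{-1}F^m(s'')=s^m$ do the real work. This final counting step is the mathematical core of the proof and is where the hypothesis $u\in Z_{Z_G(s)^\circ}(u)^\circ$ is actually consumed; it cannot be replaced by a bare appeal to a ``functorial'' fundamental lemma.
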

\begin{proof}
The first part is essentially proved in the arguments in \cite[P2031-2032]{Chen_2018_GreenFunction}; here we include the details for the completeness. Let $c\in G$ be such that $s^c\in T_1\subseteq G_1$, then it is sufficient to show that $u^c\in Z_{G}(s^c)^{\circ}$. 

\vspace{2mm} Write $u^c=u_1u_2$ with respect to the decomposition $G=G_1G^1$. By applying the reduction $\rho_{r,1}$ to the identity
$$u_1u_2\cdot s^c=s^c\cdot u_1u_2$$ 
we get $u_1\in Z_{G_1}(s^c)$, which in turn implies that $u_2\in Z_{G^1}(s^c)$. As $u_1\in Z_{G_1}(s^c)^{\circ}$ (see e.g.\ \cite[2.5]{DM1991}), it is then sufficient to show that $u_2\in Z_{G^1}(s^c)^{\circ}$. Actually we would show that $Z_{G^1}(s^c)=Z_{G^1}(s^c)^{\circ}$.

\vspace{2mm} Write $u_2=tv$ with $t\in T^1$ and $v\in U^1(U^-)^1$; this is the Iwahori decomposition in the sense of \cite[2.2]{Sta2009Unramified}. As $t,s^c\in T$ commute with each other, we see that $v$ also commutes with $s^c$. Now consider the decomposition $v=\prod_{\alpha}v_{\alpha}$ with $v_{\alpha}\in (U_{\alpha})^1$, where $\alpha$ runs over the roots with respect to $T_1$. Then the commutation condition 
$$(s^c)^{-1}\cdot v\cdot s^c=v$$ 
means that, for each $\alpha$, either $v_{\alpha}=1$ or $\alpha(s^c)=1$. Thus $Z_{G^1}(s^c)$ is an affine space, hence connected as desired.

\vspace{2mm} Now we turn to the second part. We shall start with a generalised Green function character formula, and then use a counting method following \cite[IV]{DM_L-function_Book}. 

\vspace{2mm} By \cite[3.3]{Chen_2018_GreenFunction} we have:
\begin{equation}\label{formula:induction character formula}
\begin{split}
\mathfrak{R}_{T,U,b}^{\theta} (g)=&\frac{1}{|(Z_{G}(s)^{\circ})^F|} \\
& \cdot  \sum_{   \left\lbrace\substack{   h\in G^F\ \text{s.t.} \\  s\in {^h(T_1)^F}  } \right\rbrace   } \sum_{\tau\in {^h(T^1)^F} }         {\theta}(s^h\cdot \tau^h)\cdot   Q_{{^hT},{^hU}\cap Z_G(s)^{\circ},b}^{Z_{G}(s)^{\circ}}(u,\tau^{-1}),
\end{split}
\end{equation}
where 
$$Q_{{^hT},{^hU}\cap Z_G(s)^{\circ},b}^{Z_{G}(s)^{\circ}}(-,-)\colon (\mathcal{U}_{Z_G(s)^{\circ}})^F\times {^h(T^1)^F}\longrightarrow\overline{\mathbb{Q}}_{\ell}$$ 
is the two-variable Green function in \cite[3.1]{Chen_2018_GreenFunction} (here $\mathcal{U}_{Z_G(s)^{\circ}}$ denotes the unipotent variety of $Z_G(s)^{\circ}$), which is defined by:
\begin{equation*}
(x,y)\longmapsto  \frac{1}{|T^F|}\mathrm{Tr}\left((x,y)  \mid \sum_{i}(-1)^i\cdot H_c^i(Z_G(s)^{\circ}\cap L^{-1}( {^h FU^{r-b,b} }),\overline{\mathbb{Q}}_{\ell})  \right).
\end{equation*}

\vspace{2mm} Since $s$ is contained in an $F$-stable maximal torus (see e.g.\ \cite[3.16]{DM1991}), by Lang--Steinberg theorem and the above first part of the theorem, there are $u'\in Z_{G}(s)^{\circ}$ and semisimple $s'\in Z_{G}(s)^{\circ}$ such that $u'^{-1}F(u')=u$ and $s'^{-1}F(s')=s$. Thus
\begin{equation*}
\begin{split}
n_{F}([g]_{G^F})= & n_{F}([s'^{-1}F(s')u'^{-1}F(u')]_{G^F})=n_{F}([u'^{-1}s'^{-1}F(s')F(u')]_{G^F})\\
= & [F(s')F(u')u'^{-1}s'^{-1}]_{G^F}\\
= & [F(s')s'^{-1}(F(u')u'^{-1})^{s'^{-1}}]_{G^F}\\
= & [s\cdot {^{s'}(F(u')u'^{-1})}]_{G^F}.
\end{split}
\end{equation*}
Note that $s\cdot {^{s'}(F(u')u'^{-1})}$ is a Jordan decomposition, so, evaluating \eqref{formula:induction character formula} with $n_F([g]_{G^F})$ we get
\begin{equation}\label{formula:induction character formula 2}
\begin{split}
& \mathrm{Sh}_G (\mathfrak{R}_{T,U,b}^{\theta})(g)\\
& =\frac{1}{|(Z_{G}(s)^{\circ})^F|} \cdot  \sum_{   \left\lbrace\substack{   h\in G^F\ \text{s.t.} \\  s\in {^h(T_1)^F}  } \right\rbrace   } \sum_{\tau\in {^h(T^1)^F} }         {\theta}(s^h\cdot \tau^h)\cdot   Q_{{^hT},{^hU}\cap Z_G(s)^{\circ},b}^{Z_{G}(s)^{\circ}}( {^{s'}(F(u')u'^{-1})},\tau^{-1}).
\end{split}
\end{equation}

\vspace{2mm} Comparing \eqref{formula:induction character formula} and \eqref{formula:induction character formula 2} it remains to show that: For a given $h\in G^F$ with $s\in {^h(T_1)}$, and $\tau \in {^h(T^1)^F}$, we have
\begin{equation}\label{formula:green}
Q_{{^hT},{^hU}\cap Z_G(s)^{\circ},b}^{Z_{G}(s)^{\circ}}(u,\tau^{-1})=Q_{{^hT},{^hU}\cap Z_G(s)^{\circ},b}^{Z_{G}(s)^{\circ}}({^{s'}(F(u')u'^{-1})},\tau^{-1}).
\end{equation}
Note that, by our assumption on $u$ (that is, $u\in Z_{Z_{G}(s)^{\circ}}(u)^{\circ}$) and Lemma~\ref{lemm:a fundamental lemma}, we have 
$$F(u')u'^{-1}={^zu}$$ 
for some $z\in (Z_G(s)^{\circ})^F$, so ${^{s'}(F(u')u'^{-1})}={^{s''}u}$ where $s'':=s'z$. Hence \eqref{formula:green} can be written as
\begin{equation}\label{formula:green 2}
Q_{{^hT},{^hU}\cap Z_G(s)^{\circ},b}^{Z_{G}(s)^{\circ}}(u,\tau^{-1})=Q_{{^hT},{^hU}\cap Z_G(s)^{\circ},b}^{Z_{G}(s)^{\circ}}({^{s''}u},\tau^{-1}).
\end{equation}
Moreover, by definition $Q_{{^hT},{^hU}\cap Z_G(s)^{\circ},b}^{Z_{G}(s)^{\circ}}(-,-)$ can be viewed as the restriction of a class function on $(Z_G(s)^{\circ})^F\times {^h(T^1)^F}$. In particular, the value $Q_{{^hT},{^hU}\cap Z_G(s)^{\circ},b}^{Z_{G}(s)^{\circ}}({^{s''}u},\tau^{-1})$ is independent of the choice of $s'\in Z_G(s)^{\circ}$, as any two such $s'$'s are different only up to a left translation by some element in $(Z_G(s)^{\circ})^F$. Therefore, as $s\in {^h(T_1)}$, in the below we can assume that $s''\in {^h(T_1)}$. (So we would have ${\tau}=\tau^{s''}$ as both $s'',\tau$ are in the commutative group ${^hT}$.)

\vspace{2mm} By basic properties of Lefschetz numbers (see \cite[1.2]{Lusztig_whiteBk}), showing \eqref{formula:green 2} is equivalent to showing:
\begin{equation}\label{formula:counting points}
\begin{split}
\# \{ x\in Z_G(s)^{\circ}\cap &  L^{-1}( {^h FU^{r-b,b} })  \mid   {^{s''}u}\cdot F^m(x)\cdot \tau^{-1} =x  \} \\
& =\# \{ x\in Z_G(s)^{\circ}\cap L^{-1}( {^h FU^{r-b,b} })  \mid   {u}\cdot F^m(x)\cdot \tau^{-1}=x  \} 
\end{split}
\end{equation}
for all $m\in \mathbb{Z}_{>0}$ with $F^m(U^{r-b,b})=U^{r-b,b}$.

\vspace{2mm} Taking the variable change $x\mapsto s''y$, the LHS of \eqref{formula:counting points} becomes 
$$\# \{ y\in Z_G(s)^{\circ} \mid   sy^{-1}F(y)\in {^hFU^{r-b,b}} ,  u\cdot s''^{-1}F^m(s'')\cdot F^m(y)\cdot \tau^{-1} =y  \}. $$
(Recall that $s''^{-1}F(s'')=s$ by construction.) Note that 
$$s''^{-1}F^m(s'')=s''^{-1}F(s'')\cdot F(s''^{-1})F^2(s'')\cdots F^{m-1}(s''^{-1})F^m(s'')=sF(s)\cdots F^{m-1}(s)=s^m.$$
So the LHS of \eqref{formula:counting points} equals to:
$$\# \{ y\in Z_G(s)^{\circ} \mid   sy^{-1}F(y)\in {^hFU^{r-b,b}} ,  s^mu F^m(y) \tau^{-1} =y  \}. $$
Similarly, via the variable change $x\mapsto ys''$ the RHS of \eqref{formula:counting points} equals to: (Recall that both $s''$ and $\tau$ are in the $F$-stable commutative group ${^hT}$)
\begin{equation*}
\begin{split}
\# \{ y\in Z_G(s)^{\circ} \mid  & s\cdot (y^{-1}F(y))^{F(s'')}\in {^hFU^{r-b,b}} ,  s^mu F^m(y) \tau^{-1} =y  \}\\
& =\# \{ y\in Z_G(s)^{\circ} \mid   s\cdot y^{-1}F(y)\in {^hFU^{r-b,b}} ,  s^mu F^m(y) \tau^{-1} =y  \},
\end{split}
\end{equation*}
where the equality follows from that $F(s'')\in {^hT}$ stabilises ${^hFU^{r-b,b}}$. Thus \eqref{formula:counting points} holds.
\end{proof}

In the statement of the above theorem, note that if $s=1$, then the condition $u\in Z_{Z_{G}(s)^{\circ}}(u)^{\circ}$ becomes $u\in Z_G(u)^{\circ}$, and by Lemma~\ref{lemm:a fundamental lemma} the twisting operator does not affect the character values at such unipotent elements for any character. So we want to ask:

\begin{quest}\label{quest:centraliser quest}
When a unipotent element of $G_r$ lies in the identity component of its centraliser?
\end{quest}

In the case $r=1$, this is well-known (to be always true) if $p$ is a good prime, which is a classical application of the Springer homeomorphism; see \cite[Proposition~5]{McNinch--Sommers_UnipCentGoodChar} and \cite[III.3.15]{Springer--Steinberg_1970_Conj}. For a general $r$ we list some situations in the below.

\begin{prop}\label{prop:unipotent centraliser prop}
Let $g=su\in G_r^F$ be as in Theorem~\ref{thm:main result}.
\begin{itemize}
\item[(1)] If $p$ is good for $G_1$ and $u\in G_1$, then $u\in Z_{G_r}(u)^{\circ}$.

\item[(2)] If $u\in G_r^{[\frac{r+1}{2}]}$, then $u\in Z_{G_r}(u)^{\circ}$. (Here $[-]$ denotes the Gauss floor function.)

\item[(3)] If (i) $g\in G_1$ and $p$ is good for $Z_{G_1}(s)^{\circ}$, or if (ii) $u\in G^{[\frac{r+1}{2}]}$, then the condition ``$u\in Z_{Z_{G}(s)^{\circ}}(u)^{\circ}$'' in Theorem~\ref{thm:main result} holds.

\item[(4)] If $\mathbb{G}=\mathrm{GL}_n$, then $Z_{G}(g)$ is always connected; in particular, $\mathrm{Sh}_G$ is the identity map for $\mathrm{GL}_n$.
\end{itemize}
\end{prop}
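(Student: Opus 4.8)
The plan is to treat the four items in turn, with items (1)–(3) reducing to the congruence-filtration analysis already used in the proof of Theorem~\ref{thm:main result}, and item (4) being a direct computation with centralisers in $\mathrm{GL}_n$.

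\medskip

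\textbf{(1).} Here $u\in G_1$ and $p$ is good for the reductive group $G_1$. First I would invoke the classical $r=1$ result (via the Springer homeomorphism, \cite[Proposition~5]{McNinch--Sommers_UnipCentGoodChar}, \cite[III.3.15]{Springer--Steinberg_1970_Conj}): $u\in Z_{G_1}(u)^{\circ}$. Then I would analyse $Z_{G_r}(u)$ through the exact sequence $1\to Z_{G^1}(u)\to Z_{G_r}(u)\to Z_{G_1}(u)$. The subgroup $Z_{G^1}(u)$ is the fixed-point set of the (unipotent) conjugation action of $u\in G_1$ on the successive quotients $G^i/G^{i+1}\cong\mathfrak g$, i.e.\ the fixed points of $\mathrm{Ad}(u)$ on each copy of $\mathfrak g$; being the kernel of the additive map $\mathrm{Ad}(u)-\mathrm{id}$ on an affine space, it is connected, and an iterated-extension argument up the filtration shows $Z_{G^1}(u)$ is connected. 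Since $u$ already lies in $Z_{G_1}(u)^{\circ}$, lifting along the connected-kernel surjection $\rho_{r,1}$ places $u$ in $Z_{G_r}(u)^{\circ}$. The only subtlety is bookkeeping with the non-abelian filtration $G^1\supset G^2\supset\cdots$, but each graded piece is a vector group on which $u$ acts additively, so connectedness of fixed points propagates.

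\medskip

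\textbf{(2).} If $u\in G^{k}$ with $k=[\tfrac{r+1}{2}]$, then $2k\ge r$, so $u$ is unipotent with $\rho_{r,1}(u)=1$, and in fact (as in Iwahori/Stasinski-type computations) one can write $u=1+x$ with $x$ in a filtration piece high enough that conjugation by $u$ on $G^{j}$ for small $j$ is already trivial modulo the relevant power of $\pi$. The key point is that $Z_{G_r}(u)\supseteq G^{k}$ entirely (since $G^{k}$ is abelian when $2k\ge r$), and more generally $Z_{G_r}(u)$ is cut out inside $G_r$ by finitely many additive (linearised) conditions coming from the graded pieces — because the commutator $[u,-]$ lands in a deep enough filtration step that all the relevant equations are linear. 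Hence $Z_{G_r}(u)$ is an iterated extension of vector groups by a connected reductive-type piece, so connected, giving $u\in Z_{G_r}(u)^{\circ}$.

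\medskip

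\textbf{(3).} This is the same two statements but relative to $H:=Z_{G_r}(s)^{\circ}$ in place of $G_r$. In case (i), $g\in G_1$ forces $s\in G_1$, and the proof of the first part of Theorem~\ref{thm:main result} identifies $H$ (after conjugating $s$ into $T_1$) as a group of the same shape: $H_1=Z_{G_1}(s)^{\circ}$ is reductive with $p$ good by hypothesis, and $H^1=Z_{G^1}(s)$ was shown there to be a connected affine space with the adjoint action being that of $H_1$ on the fixed subspace $\mathfrak g^{\,\mathrm{Ad}(s)}$; so (1) applied inside $H$ gives $u\in Z_H(u)^{\circ}=Z_{Z_{G}(s)^{\circ}}(u)^{\circ}$. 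In case (ii), $u\in G^{k}$ with $k=[\tfrac{r+1}{2}]$ and $u\in H$, so $u\in H^{k}=H\cap G^{k}$; applying the argument of (2) inside $H$ (using that $H$ inherits the congruence filtration from $G_r$, with graded pieces vector groups) gives $u\in Z_H(u)^{\circ}$.

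\medskip

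\textbf{(4).} For $\mathbb G=\mathrm{GL}_n$ we have $G_r^F=\mathrm{GL}_n(\mathcal O_r)$ and, more to the point over $\overline{\mathbb F}_q$, $Z_{G_r}(g)=(A_r)^{\times}$ where $A_r$ is the centraliser $\overline{\mathbb F}_q[[\pi]]/\pi^r$-subalgebra of $\mathrm{M}_n(\overline{\mathbb F}_q[[\pi]]/\pi^r)$ consisting of matrices commuting with $g$. The unit group of a finite-dimensional (associative, unital) algebra over an algebraically closed field is connected — it is the complement of the vanishing locus of the (reduced) norm, an irreducible open subvariety of the affine space underlying the algebra — so $Z_{G_r}(g)=A_r^{\times}$ is connected for every $g\in G_r$, in particular $Z_{G_r}(g)=Z_{G_r}(g)^{\circ}$. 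By the Corollary following Lemma~\ref{lemm:a fundamental lemma}, $\mathrm{Sh}_G$ then has order $1$, i.e.\ is the identity map.

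\medskip

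The main obstacle is the filtration bookkeeping in (1)–(3): one has to check carefully that the defining equations of $Z_{G^i}(u)$ (resp.\ $Z_{H^i}(u)$) on each graded piece $G^i/G^{i+1}$ are genuinely linear — automatic in (2)/(3)(ii) because $u$ is congruent to $1$ to high order, and in (1)/(3)(i) because one only needs connectedness of the fixed points of a single unipotent operator acting linearly on a vector group, layered up the central series — and that the extension of a connected group by these connected graded kernels stays connected. Once connectedness of the relevant centralisers is established, the lifting along $\rho_{r,1}$ and the reduction to the $r=1$ Springer-homeomorphism statement are formal.
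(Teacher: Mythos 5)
Your treatment of parts (3) and (4) is essentially the paper's: for (3)(i) you apply the McNinch--Sommers result to $Z_{G_1}(s)^{\circ}$, for (3)(ii) you apply the argument of (2) inside $Z_G(s)^{\circ}$, and for (4) you observe that the centraliser is the unit group of an associative subalgebra of $\mathrm{M}_{nr}(\overline{\mathbb{F}}_q)$, hence an irreducible open subset of an affine space (the paper states this via the explicit block-matrix injection \eqref{formula:injection}, but it is the same argument). However, in parts (1) and (2) you detour into proving a strictly stronger claim — that the whole centraliser $Z_{G_r}(u)$ is connected — and that detour has a real gap.

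For (1), after correctly invoking $u\in Z_{G_1}(u)^{\circ}$, you set up the sequence $1\to Z_{G^1}(u)\to Z_{G_r}(u)\to Z_{G_1}(u)$ and then claim that connectedness of fixed points on each graded piece $G^i/G^{i+1}$, together with ``lifting along $\rho_{r,1}$,'' puts $u$ in $Z_{G_r}(u)^{\circ}$. Two steps are not justified: (a) connectedness of the centraliser does not propagate up the filtration merely because each graded fixed-point space is a vector group — the filtered centraliser $Z_{G^i}(u)$ need not surject onto the fixed points in $G^i/G^{i+1}$, so ``iterated extension of connected pieces'' is not automatic; and (b) the restriction $\rho_{r,1}\colon Z_{G_r}(u)\to Z_{G_1}(u)$ need not be surjective, so ``lifting'' $u$ from $Z_{G_1}(u)^{\circ}$ requires an argument you do not give. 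None of this machinery is needed. Since the Levi splitting $G_r\cong G_1\ltimes G^1$ embeds $G_1$ as a closed subgroup of $G_r$, the connected group $Z_{G_1}(u)^{\circ}$ is a closed connected subgroup of $Z_{G_r}(u)$ containing $1$, hence lies inside $Z_{G_r}(u)^{\circ}$; as $u\in Z_{G_1}(u)^{\circ}$, you are done. That is the paper's one-line argument. The same remark applies to (2): you already note that $G^{[\frac{r+1}{2}]}$ is abelian (so contained in $Z_{G_r}(u)$) and contains $u$; since $G^{[\frac{r+1}{2}]}$ is connected, this alone gives $u\in Z_{G_r}(u)^{\circ}$, and the subsequent claim that $Z_{G_r}(u)$ is ``an iterated extension of vector groups ... so connected'' is both unjustified (same surjectivity issue) and unnecessary. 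In short: the key ingredients you cite (McNinch--Sommers in good characteristic; abelianness of the deep congruence subgroup) are the right ones and match the paper, but the inference from them should be the elementary observation that a connected subgroup of $Z_{G_r}(u)$ containing $u$ lands in the identity component, not a global connectedness statement about $Z_{G_r}(u)$.
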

\begin{proof}
(1) By \cite[12]{McNinch--Sommers_UnipCentGoodChar} we know $u$ lies in $Z_{G_1}(u)^{\circ}$, which is a closed subgroup of $Z_{G}(u)^{\circ}$.

\vspace{2mm} (2) As $[\frac{r+1}{2}]\geq r/2$, the group $G^{[\frac{r+1}{2}]}$ is abelian, so $u\in G^{[\frac{r+1}{2}]}=Z_{G^{[\frac{r+1}{2}]}}(u)^{\circ}\subseteq Z_{G}(u)^{\circ}$.

\vspace{2mm} (3-(i)) When $g\in G_1$ and $p$ is good for $Z_{G_1}(s)^{\circ}$, we can apply \cite[12]{McNinch--Sommers_UnipCentGoodChar} to $Z_{G_1}(s)^{\circ}$, which asserts that $u$ lies in $Z_{Z_{G_1}(s)^{\circ}}(u)^{\circ}$, a closed subgroup of $Z_{Z_{G}(s)^{\circ}}(u)^{\circ}$. 

\vspace{2mm} (3-(ii)) Now assume $u\in G^{[\frac{r+1}{2}]}$. Let $c\in G$ be such that $s^c\in G_1$. Then $(Z_{G}(s^c)^{\circ})^{[\frac{r+1}{2}]}=Z_{G}(s^c)^{\circ}\cap G^{[\frac{r+1}{2}]}$ is a connected unipotent group (see \cite[2.2]{Chen_2018_GreenFunction}), which is abelian as $[\frac{r+1}{2}]\geq r/2$. Therefore (by the first part of Theorem~\ref{thm:main result}) 
$$u^c\in (Z_{G}(s^c)^{\circ})^{[\frac{r+1}{2}]}=Z_{(Z_{G}(s^c)^{\circ})^{[\frac{r+1}{2}]}}(u^c)^{\circ}\subseteq Z_{Z_{G}(s^c)^{\circ}}(u^c)^{\circ},$$
thus $u\in Z_{Z_{G}(s)^{\circ}}(u)^{\circ}$.

\vspace{2mm} (4) Write an element in $M_n(\overline{\mathbb{F}}_q[[\pi]]/\pi^r)$ as $A_0+A_1\pi+...+A_{r-1}\pi^{r-1}$, then as observed in \cite[5.3]{Chen_2019_flag_orbit}, there is a ring injection from $M_n(\overline{\mathbb{F}}_q[[\pi]]/\pi^r)$ to ${M}_{nr}(\overline{\mathbb{F}}_q)$:
\begin{equation}\label{formula:injection}
A_0+A_1\pi+...+A_{r-1}\pi^{r-1}\longmapsto
\begin{bmatrix}
	A_0     & 0       & 0   & ... & 0   \\
	A_1     & A_0     & 0   & ... & 0   \\
	...     & ...     & ... & ... & ... \\
	A_{r-2} & ...     & A_1 & A_0 & 0   \\
	A_{r-1} & A_{r-2} & ... & A_1 & A_0
\end{bmatrix}.
\end{equation}
Thus, for $g\in\mathrm{GL}_n(\overline{\mathbb{F}}[[\pi]]/\pi^r)$, the condition ``$gx=xg$'' can be viewed as a system of linear equations in the linear space $\overline{\mathbb{F}}_q^{nr}$. So $Z_G(g)$ is an open subset of an affine space, thus connected, and $\mathrm{Sh}_G$ is the identity map by Lemma~\ref{lemm:a fundamental lemma} in this case.
\end{proof}

Now we turn to the other direction. In the remaining part of this section we take $\mathbb{G}$ to be $\mathrm{GL}_n$ and assume that $n,r\geq2$. 

\vspace{2mm} The ring injection \eqref{formula:injection} in the above argument restricts to a group embedding from $G_r$ into $\widehat{G_r}:={\mathrm{GL}_{nr}}_{/\overline{\mathbb{F}}_q}$. This realises $G_r$ as a unipotent centraliser:

\begin{lemm}
The image of the above group embedding is the centraliser of the unipotent element
\begin{equation*}
u=\begin{bmatrix}
I_n & 0 &  0 & 0 & ... & 0\\
I_n & I_n & 0 & 0 & ... & 0\\
0 & I_n & I_n & 0 & ... & 0\\
... & ... & ...  & ... & ... & ... \\
0 & ... & 0  & I_n & I_n & 0 \\
0 & 0 & ... & 0 & I_n & I_n\\
\end{bmatrix},
\end{equation*}
whose Jordan normal form is $\mathrm{diag}(J_r,...,J_r)$, where $I_n$ denotes the identity matrix of size $n$ and $J_r$ denotes the unipotent Jordan block of size $r$. In particular, $G_r$ acts on the Springer fibre $\mathcal{B}_u$, the variety of Borel subgroups (of $\widehat{G_r}$) containing $u$, by conjugation. 
\end{lemm}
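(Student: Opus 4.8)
The plan is to identify the image of the embedding \eqref{formula:injection} explicitly and to compute the centraliser of the displayed matrix $u$ directly from the block structure, showing the two coincide. First I would observe that the matrix $u$ displayed is exactly the image under \eqref{formula:injection} of the scalar $1 + \pi \in \overline{\mathbb{F}}_q[[\pi]]/\pi^r$ (interpreting $1$ and $\pi$ as $1\cdot I_n$ and $\pi\cdot I_n$), since substituting $A_0 = I_n$, $A_1 = I_n$, and $A_j = 0$ for $j \geq 2$ into the block-Toeplitz form produces precisely the lower bidiagonal matrix with $I_n$ on the diagonal and first subdiagonal. Hence every element in the image of \eqref{formula:injection} commutes with $u$: the image is a subring of $M_{nr}(\overline{\mathbb{F}}_q)$ containing $u$, so the image of $\mathrm{GL}_n(\overline{\mathbb{F}}_q[[\pi]]/\pi^r) = G_r$ lands inside $Z_{\widehat{G_r}}(u)$. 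This gives the inclusion $G_r \hookrightarrow Z_{\widehat{G_r}}(u)$ and simultaneously the computation of the Jordan form: $u$ is $I_{nr}$ plus a nilpotent block-Toeplitz matrix which is conjugate (by a suitable permutation grouping coordinates so that the $n$ copies of each power of $\pi$ are interleaved) to $\mathrm{diag}(J_r,\dots,J_r)$ with $n$ blocks, each of size $r$.

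Next I would establish the reverse inclusion, i.e.\ that any $X \in M_{nr}(\overline{\mathbb{F}}_q)$ commuting with $u$ already lies in the image of \eqref{formula:injection}. Since $u - I_{nr} =: N$ is the block-shift sending the $i$-th block-row to the $(i+1)$-th (in block coordinates), commuting with $u$ is the same as commuting with $N$. Writing $X = (X_{ij})_{1\le i,j\le r}$ in $n\times n$ blocks, the relation $NX = XN$ forces $X_{ij} = X_{i+1,j+1}$ whenever the indices are in range, and $X_{1j} = 0$ for $j \geq 2$, that is, $X_{ij}$ depends only on $i - j$ and vanishes for $i < j$. Thus $X$ is block-lower-triangular and block-Toeplitz, which is exactly the shape of the right-hand side of \eqref{formula:injection} with $A_k := X_{k+1,1}$. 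This shows $Z_{M_{nr}(\overline{\mathbb{F}}_q)}(u)$ equals the image subring, hence $Z_{\widehat{G_r}}(u) = Z_{\mathrm{GL}_{nr}}(u)$ equals the image of $G_r$, which is the claimed statement. The final sentence --- that $G_r$ acts on $\mathcal{B}_u$ by conjugation --- is then immediate: $G_r = Z_{\widehat{G_r}}(u)$ normalises the condition ``contains $u$'', since if a Borel $\mathbf{B} \ni u$ and $g \in Z_{\widehat{G_r}}(u)$ then $g\mathbf{B}g^{-1} \ni gug^{-1} = u$.

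I do not expect a serious obstacle here; the argument is elementary linear algebra about centralisers of block-Toeplitz matrices. The only point requiring a little care is the passage from the ``block-Toeplitz of $1+\pi$'' form of $u$ to the Jordan normal form $\mathrm{diag}(J_r,\dots,J_r)$: one must be explicit that the nilpotent $N$ has rank $n(r-1)$, that $N^{r-1}\ne 0$ while $N^r = 0$, and more precisely that $\ker N^k$ has dimension $nk$ for each $k$, so that all Jordan blocks have size exactly $r$ and there are exactly $n$ of them. This follows directly from the block-shift description of $N$, but it is the step where a reader might want the details spelled out.
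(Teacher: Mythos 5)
Your proof is correct and carries out precisely the ``direct computations'' that the paper leaves to the reader: identifying $u$ as the image of $1+\pi$, solving $NX=XN$ in block form to get the block-Toeplitz lower-triangular shape, and matching this with the image of \eqref{formula:injection}. The Jordan form computation and the observation about the $\mathcal{B}_u$-action are also exactly right.
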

\begin{proof}
This follows from direct computations.
\end{proof}

Now identify identify the flag variety $\mathcal{B}$ of $\widehat{G_r}$ to be the quotient of $\widehat{G_r}$ by the standard upper triangular Borel subgroup of $\widehat{G_r}$, and identify $S_{nr}$ with the Weyl group defined by the diagonal maximal torus inside this Borel. So we can talk about the Borel subgroups at relative positions indexed by $S_{nr}$ and view them as elements in $\mathcal{B}$. This indicates us a natural way to construct a variety out of $\mathcal{B}_u$, on which $G_r^F$, instead of the whole connected group $G_r$, acts:
\begin{defi}\label{defi:B_{u,w}}
Let $\mathcal{B}_{u,w}:=\mathcal{B}_u\cap X_w$, the intersection (in the flag variety $\mathcal{B}$) of $\mathcal{B}_u$ and $X_w$, where $X_w$ denotes the Deligne--Lusztig variety consisting of the Borel subgroups lying on the relative position $w\in S_{nr}$ with their Frobenius image. 
\end{defi}
For a fixed $r\in\mathbb{Z}_{>1}$, a representation $R$ of $G_r^F$ is called primitive if $R$ does not factor through the lower level group $G_{r-1}^F$. It would be interesting to figure out for which $w$ the virtual representation 
$$R_{u,w}:=\sum_i(-1)^iH_c^i(\mathcal{B}_{u,w},\overline{\mathbb{Q}}_{\ell})$$ 
of $G_r^F$ is primitive. 

\vspace{2mm} Indeed, it can happen that $\mathcal{B}_{u,w}$ is actually empty; even if it is non-empty it can happen that $R_{u,w}$ is non-primitive. We determine these situations for the single cycles $(1,...,z)$:

\begin{thm}\label{thm:R_{u,w}}
Let $w=(1,...,z)\in S_{nr}$ with $z\in[1,nr]$. Then
\begin{itemize}
\item[(i.)] If $z=1$, then $R_{u,w}$ is a primitive permutation representation.
\item[(ii.)] If $1< z<n$, then $R_{u,w}$ is primitive.
\item[(iii.)] If $z=n$, then $R_{u,w}\neq0$ is non-primitive.
\item[(iv.)] If $n< z\leq nr$, then $\mathcal{B}_{u,w}=\emptyset$ (so $R_{u,w}=0$).
\end{itemize}
\end{thm}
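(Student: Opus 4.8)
The plan is to rephrase everything in module‑theoretic terms. Identify $\overline{\mathbb{F}}_q^{nr}$ with $M:=(\overline{\mathbb{F}}_q[\pi]/\pi^{r})^{n}$ so that the embedded copy of $G_r=\mathrm{GL}_n(\overline{\mathbb{F}}_q[\pi]/\pi^r)$ acts by left multiplication; under \eqref{formula:injection} the element $u$ becomes multiplication by $1+\pi$, so $N:=u-I$ is multiplication by $\pi$, $\ker N^{k}=\pi^{r-k}M$ has $\overline{\mathbb{F}}_q$‑dimension $kn$, and a subspace is $u$‑stable iff it is an $\overline{\mathbb{F}}_q[\pi]/\pi^r$‑submodule of $M$. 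Thus $\mathcal{B}_u$ is the variety of complete chains of submodules of $M$ and $\mathcal{B}_u^{F}$ the set of $F$‑stable (``rational'') such chains. I would first record two auxiliary facts. Socle/cosocle criterion: an element $I+\pi^{r-1}X$ of the congruence group $(G^{r-1})^{F}$ satisfies $(I+\pi^{r-1}X)V=V$ for a submodule $V$ iff $\pi^{r-1}XV\subseteq V$, and letting $X$ vary, $(G^{r-1})^{F}$ fixes $V$ iff $V\subseteq\pi M$ or $\ker N=\pi^{r-1}M\subseteq V$. Relative position: $w=(1,\dots,z)=s_1\cdots s_{z-1}$ is reduced of length $z-1$, so $V_\bullet\in\mathcal{B}_{u,w}=\mathcal{B}_u\cap X_w$ iff $V_\bullet$ is a complete chain of submodules with $V_i=F(V_i)$ for $i\geq z$, with $\dim(V_i\cap F(V_i))=i-1$ for $1\leq i\leq z-1$, and — a consequence of these — with $F(V_{i-1})\subseteq V_i$ for $1\leq i\leq z$.

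The combinatorial heart is the claim that every $V_\bullet\in\mathcal{B}_{u,w}$ satisfies $V_j\subseteq\ker N$ for $1\leq j\leq z$. This is an induction on $j$: $V_1$ is a line stable under the nilpotent $N$, hence $V_1\subseteq\ker N$; and if $V_j\subseteq\ker N$ then $F(V_j)\subseteq\ker N$ ($F$‑stability of $\ker N$), so $V_j+F(V_j)\subseteq V_{j+1}\cap\ker N$, and $\dim(V_j+F(V_j))=2j-(j-1)=j+1=\dim V_{j+1}$ forces $V_{j+1}\subseteq\ker N$. Since $\dim\ker N=n$, this immediately gives (iv): if $z>n$ then $V_z\subseteq\ker N$ is impossible, so $\mathcal{B}_{u,w}=\emptyset$. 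For $z\leq n$ it also yields a product description: the ``bottom'' $V_1\subset\cdots\subset V_{z-1}$ is a complete chain inside the $n$‑dimensional space $\ker N$ cut out by the relative‑position constraints — a Deligne--Lusztig variety $\mathcal{D}$ for $\mathrm{GL}(\ker N)\cong\mathrm{GL}_n$ attached to the $z$‑cycle; $V_z=V_{z-1}+F(V_{z-1})$ is then forced, a $z$‑dimensional $F$‑stable subspace of $\ker N$, hence constant on each connected component of $\mathcal{D}$; and the ``top'' $V_z\subset V_{z+1}\subset\cdots\subset M$ is an arbitrary $F$‑stable chain of submodules extending $V_z$, uncoupled from the bottom. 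Hence $\mathcal{B}_{u,w}\cong\bigsqcup_{\alpha}\mathcal{D}_\alpha\times Y(W_\alpha)$, where $\alpha$ runs over the components of $\mathcal{D}$, $W_\alpha$ is the corresponding value of $V_z$, and $Y(W)$ is the finite nonempty set of rational submodule chains from $W$ up to $M$; this is $(G^{r-1})^{F}$‑equivariant, the congruence group acting trivially on each $\mathcal{D}_\alpha$ (it acts trivially on all of $\ker N$) and by the evident permutation on $Y(W_\alpha)$.

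The four cases now follow. (i) For $z=1$, $w$ is the identity, $X_w=\mathcal{B}^{F}$, so $\mathcal{B}_{u,w}=\mathcal{B}_u^{F}$ is finite and $R_{u,w}$ is the permutation module on it; it is primitive because a maximal rational chain of submodules through $\overline{\mathbb{F}}_q[\pi]e_1$ is moved by $I+\pi^{r-1}E_{21}$ (which, by the socle criterion, moves that submodule itself), so $(G^{r-1})^{F}$ does not act trivially on $\mathcal{B}_u^{F}$. (iii) For $z=n$ the forced value $V_n=\ker N$ makes every $V_\bullet\in\mathcal{B}_{u,w}$ satisfy $V_j\subseteq\ker N\subseteq\pi M$ for $j<n$ and $\ker N\subseteq V_j$ for $j\geq n$; by the socle/cosocle criterion $(G^{r-1})^{F}$ acts trivially on all of $\mathcal{B}_{u,w}$, so $R_{u,w}$ is inflated from $G_{r-1}^{F}$ — non‑primitive. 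It is nonzero because $\mathcal{B}_{u,w}\cong\mathcal{D}\times Y(\ker N)$ with $\mathcal{D}$ the (Coxeter) Deligne--Lusztig variety of $\mathrm{GL}_n$, whose Euler characteristic is a nonzero Deligne--Lusztig degree, and $Y(\ker N)\neq\emptyset$.

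(ii) For $1<z<n$ the strategy is to exhibit, inside some flag of $\mathcal{B}_{u,w}$, a member violating the socle/cosocle criterion, and then to upgrade this to non‑triviality of $(G^{r-1})^{F}$ on cohomology. Fix any $z$‑dimensional rational subspace $W\subseteq\ker N$ (all occur as $V_z$, by $\mathrm{GL}_n(\mathbb{F}_q)$‑transitivity on such subspaces together with $\mathcal{D}\neq\emptyset$), pick a nonzero rational $w_0\in W$, write $w_0=\pi^{r-1}v$ with $v$ rational of depth $0$, and set $V:=W+\overline{\mathbb{F}}_q[\pi]v$: this is a rational submodule of dimension $z+r-1$ with $V\not\subseteq\pi M$ but $\ker N\cap V=W\subsetneq\ker N$. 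Any rational submodule chain through $V$ then lies in some $Y(W)$ and is moved by a suitable element of $(G^{r-1})^{F}$. Using the product decomposition and Künneth, $R_{u,w}|_{(G^{r-1})^{F}}=\sum_W\chi_c(\mathcal{D}_W)\cdot\overline{\mathbb{Q}}_\ell[Y(W)]$, where $\mathcal{D}_W$ is the fibre of $\mathcal{D}\to\{z\text{-dim rational subspaces of }\ker N\}$ over $W$; by transitivity all $\chi_c(\mathcal{D}_W)$ equal a common value $e=\chi_c(\mathcal{D})/\binom{n}{z}_q$, which is nonzero because $\chi_c(\mathcal{D})$ is a nonzero Deligne--Lusztig degree. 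Thus $R_{u,w}|_{(G^{r-1})^{F}}=e\cdot\overline{\mathbb{Q}}_\ell[\,\textstyle\bigsqcup_W Y(W)\,]$ is a nonzero multiple of a nontrivial permutation module, hence not a multiple of the trivial character, so $R_{u,w}$ does not factor through $G_{r-1}^{F}$: it is primitive. I expect the main obstacle to be precisely this last part of (ii) — passing from the set‑theoretic statement ``$(G^{r-1})^{F}$ moves a point'' to non‑triviality on the virtual cohomology, which is why the clean product decomposition of $\mathcal{B}_{u,w}$ and the non‑vanishing of Deligne--Lusztig Euler characteristics are indispensable; and establishing that decomposition rigorously (that $V_z$ carries no monodromy over $\mathcal{D}$, and that top and bottom flags are genuinely independent) is the other place where care is needed.
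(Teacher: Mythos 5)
Your argument is correct, and it reaches the same geometric decomposition of $\mathcal{B}_{u,w}$ (a disjoint union of Coxeter Deligne--Lusztig varieties of $\mathrm{GL}_z$, fibered over a finite set of rational ``top'' flags on which the congruence subgroup acts by permutation) that drives the paper's proof, so the overall strategy is the same. What you do differently is the framing: by identifying $\overline{\mathbb{F}}_q^{nr}$ with the module $M=(\overline{\mathbb{F}}_q[\pi]/\pi^r)^n$, identifying $u$ with multiplication by $1+\pi$, and proving the socle/cosocle criterion (that $(G^{r-1})^F$ fixes a subspace $V$ iff $V\subseteq\pi M$ or $\ker N\subseteq V$), you replace the paper's explicit coordinate flags and hand calculations with short conceptual arguments. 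Your ``combinatorial heart'' $V_j\subseteq\ker N$ for $j\leq z$ is the same observation the paper makes (the paper phrases it as: $v_1$ is of the form $(0,\ldots,0,a_1,\ldots,a_n)^T$ and its $F$-orbit spans at most $n$ dimensions), but the module lens makes the induction and the product decomposition cleaner. You are also more careful than the paper at one genuine subtlety in case (ii.): the paper asserts that $R_{u,w}$ restricted to $(G^{r-1})^F$ ``is a permutation representation'' and then invokes a moved point, while in fact the restriction is $\chi_c(Y_z)$ times a permutation module; your K\"unneth computation and the explicit observation that $\chi_c$ of a Coxeter variety is nonzero make the step from a moved point to non-triviality of the virtual cohomology airtight. (The paper does note $\chi_c(Y_z)\neq 0$, but only in case (iii.) and for the different purpose of showing $R_{u,w}\neq 0$.) Finally, your choices of flags witnessing non-triviality of the $(G^{r-1})^F$-action in (i.) and (ii.) --- via $\overline{\mathbb{F}}_q[\pi]e_1$ and via $V=W+\overline{\mathbb{F}}_q[\pi]v$ respectively, checked against the socle/cosocle criterion --- are simpler than the paper's single explicit flag in (i.) which is then truncated for (ii.), but they serve exactly the same role.
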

\begin{proof}
In the below we would interpret a point in $\mathcal{B}$ as a complete flag; the translation of relative positions in terms of flags can be found in \cite[I.4.3]{AG4} (see also \cite{Webster_RelativePosition}). To make the argument coherent, we would treat (i.) first, then (iv.), (iii.), and finally (ii.).

\vspace{2mm}
From now on $\widehat{G_r}$ is understood as the automorphism group of the $\overline{\mathbb{F}}_q$-space $V$ with the standard basis 
\begin{equation}\label{formula:basis}
\{ x_1^{(0)},x_2^{(0)},...,x_n^{(0)},x_1^{(1)},x_2^{(1)},...,x_n^{(r-1)} \}.
\end{equation}
We use $\mathcal{F}\colon V_1\subseteq V_2\subseteq...\subseteq V_{nr}=V$ with $\dim V_i=i$ as a general notation for a complete flag of $V$.

\vspace{2mm} (i.) In this case $\mathcal{B}_{u,w}$ is the finite set of $\mathbb{F}_q$-rational flags of $V$ stabilised by $u$. In particular $R_{u,w}=\overline{\mathbb{Q}}_{\ell}[\mathcal{B}_{u,w}]$ is a permutation representation. Consider the complete flag $\mathcal{F}\colon V_1\subseteq V_2\subseteq...\subseteq V_{nr}=V$ given by
\begin{equation*}
\begin{split}
V_1
& =\langle x_n^{(r-1)} \rangle \subseteq \langle x_n^{(r-1)}, x_{n-1}^{(r-1)} \rangle\subseteq...\subseteq \langle  x_n^{(r-1)}, x_{n-1}^{(r-1)},...,x_2^{(r-1)} \rangle\\
& \subseteq \langle x_n^{(r-1)}, x_{n-1}^{(r-1)},...,x_2^{(r-1)}, x_n^{(r-2)} \rangle \subseteq\langle x_n^{(r-1)}, x_{n-1}^{(r-1)},...,x_2^{(r-1)}, x_n^{(r-2)}, x_{n-1}^{(r-2)} \rangle\\
& \subseteq...\subseteq  \langle x_n^{(r-1)},...,x_2^{(r-1)}, x_n^{(r-2)},...,x_2^{(r-2)}  \rangle   \\
&\subseteq ...\subseteq \langle x_n^{(r-1)},...,x_2^{(r-1)},..., x_n^{(1)},...,x_2^{(1)} \rangle =V_{(n-1)(r-1)}  \\
& \subseteq \langle V_{(n-1)(r-1)}, x_n^{(0)}+x_1^{(r-1)} \rangle=V_{(n-1)(r-1)+1} \\
& \subseteq \langle V_{(n-1)(r-1)+1}, x_1^{(r-1)} \rangle \subseteq \langle V_{(n-1)(r-1)+1}, x_1^{(r-1)}, x_1^{(r-2)} \rangle \\
& \subseteq... \subseteq \langle V_{(n-1)(r-1)+1}, x_1^{(r-1)},...,x_1^{(1)} \rangle=V_{nr-n+1} \subseteq \langle V_{nr-n+1}, x_{n-1}^{(0)} \rangle \\
& \subseteq \langle V_{nr-n+1}, x_{n-1}^{(0)}, x_{n-2}^{(0)} \rangle \subseteq...\subseteq \langle V_{nr-n+1}, x_{n-1}^{(0)},..., x_{1}^{(0)} \rangle=V_{nr}.
\end{split}
\end{equation*}
Note that $\mathcal{F}\in \mathcal{B}_{u,w}$. Also note that, by construction every vector in the piece $V_{(n-1)(r-1)+1}$ has the property that its $n$-th coordinate equals to its $(n(r-1)+1)$-th coordinate. However, this is a property not preserved under the action of $(G^{r-1}_r)^F$. So $\mathcal{F}$ is an element of $\mathcal{B}_{u,w}$ not stabilised by $(G^{r-1}_r)^F$, which implies that $R_{u,w}$ is primitive.

\vspace{2mm}
(iv.) In this case $X_{w}$ is the moduli of complete flags $\mathcal{F}$ whose first $z$ pieces are of the form (see also \cite[2.2]{DL1976})
$$V_1\subseteq V_1+FV_1\subseteq V_1+FV_1+F^2V_1\subseteq \cdots \subseteq ... \subseteq \bigoplus_{j=0}^{z-1}F^jV_1=V_z\cong\overline{\mathbb{F}}_q^{z},$$
where $\dim V_1=1$, and $V_z$ together with all the other pieces are $\mathbb{F}_q$-rational. If this flag is contained in $\mathcal{B}_{u,w}$, then $V_1=\langle v_1 \rangle$ is stabilised by $u$, so we have $uv_1=v_1$ as $u$ is unipotent, which implies that $v_1$ is a vector of the form $(0,...,0,a_1,...,a_n)^T$. Note that the orbit of such a vector under $F$ can only generate a space of dimension at most $n$, smaller than $z$. Thus $\mathcal{B}_{u,w}=\emptyset$. 

\vspace{2mm}
(iii.) In this case, similar to the above discussion we have that, any flag $\mathcal{F}$ in $\mathcal{B}_{u,w}$ must be of the following form: (a) The first $n$ pieces are
$$V_1\subseteq V_1+FV_1\subseteq V_1+FV_1+F^2V_1\subseteq \cdots \subseteq ... \subseteq \bigoplus_{j=0}^{n-1}F^jV_1=V_n\cong\overline{\mathbb{F}}_q^{n},$$
where $V_1$ is generated by a vector $v_1$ whose all but the last $n$ coordinates are zero, and (b) $V_n$ and other pieces of $\mathcal{F}$ are $F$-stable. In particular, the $n$-th piece is a fixed space (see the notation of \eqref{formula:basis})
$$V_n=\langle x_1^{(r-1)},x_2^{(r-1)},...,x_n^{(r-1)} \rangle.$$
Now, to form a $V_{n+1}$ (so that the flag lies in $X_{w}$), it suffices to choose an $F$-stable line in the space generated by $\{ x_1^{(0)},x_2^{(0)},...,x_n^{(0)},x_1^{(1)},x_2^{(1)},...,x_n^{(r-2)} \}$; by identifying a line in the vector space as a point in the projective space, this means to choose an $\mathbb{F}_q$-point of $\mathbb{P}^{n(r-1)-1}_{ \overline{\mathbb{F}}_q }$. Continuing this process we see that $\mathcal{B}_{u,w}$ is a non-empty union of irreducible components of $X_w$, and thus isomorphic to a union of components of 
$$Y_z\times \prod_{j=1}^{n(r-1)-1} \mathbb{P}^{j}(\mathbb{F}_q),$$
where $Y_z$ is the Coxeter variety, i.e.\ the connected variety of all $v\in \mathbb{P}^{z-1}_{\overline{\mathbb{F}}_q}$ which do not lie on any $\mathbb{F}_q$-rational hyperplane. Note that by construction the subgroup $(G_r^{r-1})^F$ (given by the ``$A_{r-1}$-part'' in the image of the group embedding) does not permute these irreducible components, and in each component the action of $(G_r^{r-1})^F$ is trivial because it fixes every vector of $V_n$. So the subgroup $(G_r^{r-1})^F$ acts on $\mathcal{B}_{u,w}$ trivially, and thus the representations $H_c^i(\mathcal{B}_{u,w},\overline{\mathbb{Q}}_{\ell})$ of $G_r^F$ are all non-primitive. Since the alternating sum of cohomology groups of $Y_z$ is well-known to be non-zero, the vector space $R_{u,w}$ is non-zero, as desired.

\vspace{2mm}
(ii.) Similar to (iii.), in this case $B_{u,w}$ is a disjoint union of copies of $Y_z$, labelled by some $\mathbb{F}_q$-rational partial flags 
$$V_z\subseteq V_{z+1}\subseteq...\subseteq V_{nr}$$ 
with $V_z\subseteq\langle x_1^{(r-1)},...,x_n^{(r-1)} \rangle$, and $(G^{r-1})^F$ fixes the vectors in $V_z$. Thus $R_{u,w}$ is actually a permutation representation of $(G^{r-1})^F$. So, as in the situation of (i.), it suffices to construct a partial flag of the above type on which $(G^{r-1})^F$ acts non-trivially: We can just truncate the flag constructed in (i.), because $z\leq n-1 < (n-1)(r-1)+1$.
\end{proof}

\section{$\mathrm{SL}_2$ over finite dual numbers}\label{sec:expl}

In this section we assume that $\mathbb{G}=\mathrm{SL}_2$, $r=2$, and $p=\mathrm{char}(\mathbb{F}_q)$ odd, and denote by $\varepsilon$ the image of $\pi$ in $\mathcal{O}_r$ (so $\varepsilon^2=0$). Note that $G\cong {\mathrm{SL}_2}_{/\overline{\mathbb{F}}_q}\ltimes \mathrm{Lie}(\mathrm{SL}_2)_{/\overline{\mathbb{F}}_q}$ via the adjoint action. 

\vspace{2mm} The only possible degrees of irreducible characters of $G^F$ are (see e.g.\ \cite[Section~3]{Lusztig2004RepsFinRings}): $$1,q,q+1,\frac{q+1}{2},q-1,\frac{q-1}{2},q^2+q,q^2-q,\frac{q^2-1}{2}.$$
Here we only concern the last three ones, as the others are for non-primitive characters (i.e.\ the characters factor through the lower level group $G_1^F=\mathrm{SL}_2(\mathbb{F}_q)$). It is known that, for $\mathrm{SL}_2$, the orbits of primitive characters are always regular and their adjoint centralisers are abelian (see \cite[3.1]{Sta2011ExtendedDL}). In the below we describe how the twisting operator $\mathrm{Sh}_G$ affect these primitive characters; most properties of these characters that we need can be found in \cite{Lusztig2004RepsFinRings} and \cite{Sta2011ExtendedDL}, and we would omit the details when this is the case.

\vspace{2mm} From the work of Lusztig \cite{Lusztig2004RepsFinRings} we know that the degrees $q^2+q$ and $q^2-q$ are those afforded by $\mathfrak{R}_{T,U}^{\theta}$ ($:=\mathfrak{R}_{T,U,r}^{\theta}$) with the $\theta$'s satisfying certain generic condition; the orbits of these characters are regular semisimple. From the work of Stasinski \cite{Sta2011ExtendedDL} we know that most of the characters of degree $\frac{q^2-1}{2}$ are missed by any $\mathfrak{R}_{T,U}^{\theta}$; the orbits of these characters are regular nilpotent. 

\vspace{2mm} {Case-I: $\deg=q^2\pm q$.} We want to show that the irreducible characters of these degrees are all invariant under $\mathrm{Sh}_G$. The centraliser of the semisimple part of an element in $G_1={\mathrm{SL}_2}_{/\overline{\mathbb{F}}_q}$ is either a torus or the whole group, so by Theorem~\ref{thm:main result}, Proposition~\ref{prop:unipotent centraliser prop}, and the above discussions, it suffices to show that, if 
$A=\begin{bmatrix}
1 & u   \\
0 & 1
\end{bmatrix}
\cdot
\begin{bmatrix}
1+a\varepsilon & b\varepsilon   \\
c\varepsilon & 1-a\varepsilon
\end{bmatrix}$
with $u,a,b,c\in\overline{\mathbb{F}}_q$ and $u\neq0$, then $A\in Z_{G}(A)^{\circ}$. It is enough to show that, the subgroup (of $G$) consisting of the elements of the form 
$X=\begin{bmatrix}
1 & \upsilon   \\
0 & 1
\end{bmatrix}
\cdot
\begin{bmatrix}
1+x\varepsilon & y\varepsilon   \\
z\varepsilon & 1-x\varepsilon
\end{bmatrix}$ ($\upsilon,x,y,z\in\overline{\mathbb{F}}_q$), that commuting with $A$,
is connected. Indeed, simplifying the equation
$$XA=AX$$
we get
$$z=\frac{c}{u}\upsilon,\ x=\frac{-c}{2u}\upsilon^2+\frac{cu+2a}{2u}\upsilon,$$
so the space of solutions is isomorphic to an affine space, thus connected.

\vspace{2mm} {Case-II: $\deg=\frac{q^2-1}{2}$.} As we mentioned before, these are nilpotent orbit characters. There are two non-trivial nilpotent orbits, with representatives of the form
$o_1=\begin{bmatrix}
0 & \square\in \mathbb{F}_q^{\times} \\
0 & 0
\end{bmatrix}$
and $o_2:=\begin{bmatrix}
0 & \slashed{\square}\in \mathbb{F}_q^{\times} \\
0 & 0
\end{bmatrix}$, respectively, in $\mathfrak{g}^F\cong (G^1)^F$. Note that the centralisers of such elements (in $G^F$) are  
$$Z_i:=\left(\{ \pm1 \}\times 
\begin{bmatrix}
1 & \mathbb{F}_q \\
0 & 1
\end{bmatrix}\right)\ltimes (G^1)^F\subseteq G^F.$$
Let $\psi_1$ (resp.\ $\psi_2$) be an extension (to $Z_i$) of a fixed irreducible character of $(G^1)^F$ corresponding to $o_1$ (resp.\ $o_2$), then according to Clifford theory (see e.g.\ \cite[6.17]{Isaacs_CharThy_Book}), the nilpotent primitive characters of $\mathrm{SL}_2(\mathbb{F}_q[\varepsilon])$ are precisely
$$R_{\chi,i}:=\mathrm{Ind}_{Z_i}^{G^F}\chi \psi_i\quad (i=1,2),$$
where $\chi$ runs over the irreducible characters of the left factor of $Z_i$. In particular, there are totally $2\times q+2\times q=4q$ such characters, as been indicated in \cite[3.1]{Lusztig2004RepsFinRings}. Now pick an element $X'=\begin{bmatrix}
-1 & x'\varepsilon\neq0 \\
0 & -1
\end{bmatrix}$. Note that the action of $n_F$ multiplies $x'\varepsilon$ in this matrix by a non-square element in $\mathbb{F}_q$. Since $\psi_i$ is non-trivial on exactly one of the orbits of $o_i$, we see that, via the formula of induced characters, the two class functions $\mathrm{Sh}_G(R_{\chi,i})$ and $R_{\chi,i}$ are always different at $X'$.

\vspace{2mm} Summarising Case-I and Case-II we get:

\begin{coro}\label{coro:semisimple of SL_2 are twist-invariant}
Let $R$ be a primitive irreducible character of $\mathrm{SL}_2(\mathbb{F}_q[\varepsilon])$, then $R$ is a semisimple orbit character if and only if $\mathrm{Sh}_G (R)=R$.
\end{coro}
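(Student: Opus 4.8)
The plan is to reduce Corollary~\ref{coro:semisimple of SL_2 are twist-invariant} to a clean dichotomy between the two kinds of primitive characters already enumerated in the section. By Lusztig's classification recalled above, a primitive irreducible character $R$ of $\mathrm{SL}_2(\mathbb{F}_q[\varepsilon])$ has degree $q^2+q$, $q^2-q$, or $\frac{q^2-1}{2}$; the first two are the semisimple-orbit characters (their orbits being regular semisimple), and the last are the nilpotent-orbit characters $R_{\chi,i}$. So it suffices to show (a) every character of degree $q^2\pm q$ is fixed by $\mathrm{Sh}_G$, and (b) no character $R_{\chi,i}$ of degree $\frac{q^2-1}{2}$ is fixed by $\mathrm{Sh}_G$. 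Note that Proposition~\ref{prop:Sh preserves orbits} already tells us that $\mathrm{Sh}_G$ cannot move a semisimple-orbit character to a nilpotent-orbit one or vice versa, which is a useful consistency check but not by itself enough, since a priori $\mathrm{Sh}_G$ could permute characters within the nilpotent family.

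For part (a), the strategy is to invoke Theorem~\ref{thm:main result}: if $g=su\in G^F$ and the unipotent part satisfies $u\in Z_{Z_G(s)^{\circ}}(u)^{\circ}$, then $\mathrm{Sh}_G(\mathfrak{R}_{T,U}^{\theta})(g)=\mathfrak{R}_{T,U}^{\theta}(g)$. Since the degree-$(q^2\pm q)$ characters are exactly the $\mathfrak{R}_{T,U}^{\theta}$ with $\theta$ in general position, it is enough to verify the centraliser hypothesis for every $g\in G^F$. For $g$ whose image in $G_1=\mathrm{SL}_2$ has semisimple part lying in a torus, $Z_{G_1}(s)$ is a torus or all of $\mathrm{SL}_2$; when it is the whole group we are in the situation $g\in G_1$ with $p$ good (recall $p$ is odd), so Proposition~\ref{prop:unipotent centraliser prop}(3)(i) applies; the remaining cases with $s$ regular-semisimple reduce, after conjugating $s$ into $T_1$, to the explicit linear computation carried out in the excerpt (the system $XA=AX$ has affine solution space). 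The only nontrivial subtlety is to make sure every $g\in G^F$ is covered, and that $\mathrm{Sh}_G$ fixes the two characters of a given degree individually rather than merely the pair — but the latter follows since the character values themselves are pinned down pointwise by the theorem.

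For part (b), the argument is the one already sketched: evaluate $R_{\chi,i}$ and $\mathrm{Sh}_G(R_{\chi,i})$ at the specific element $X'=\bigl(\begin{smallmatrix} -1 & x'\varepsilon \\ 0 & -1\end{smallmatrix}\bigr)$ with $x'\neq 0$. The key point is the computation, via Lemma~\ref{lemm:a fundamental lemma} applied to $Z_G(s)/Z_G(s)^{\circ}$ (here $s=-I$ and the relevant component group has order $2$, or more concretely by tracking $n_F$ directly on the $\varepsilon$-entry), that $n_F$ sends $X'$ to the matrix with $x'$ replaced by $\lambda x'$ for a fixed non-square $\lambda\in\mathbb{F}_q^\times$; then the induced-character formula $\mathrm{Ind}_{Z_i}^{G^F}\chi\psi_i$ evaluated at $X'$ detects, through $\psi_i$, whether the $\varepsilon$-entry lies in the square or non-square orbit of $o_i$, so $R_{\chi,i}(X')\neq R_{\chi,i}(n_F(X'))=\mathrm{Sh}_G(R_{\chi,i})(X')$. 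Combining (a) and (b) gives the stated equivalence. I expect the main obstacle to be the bookkeeping in part (a): ensuring the centraliser condition of Theorem~\ref{thm:main result} genuinely holds for all $g\in G^F$ (not just those with regular-semisimple or trivial semisimple part), and handling the mixed case where $s$ is noncentral semisimple but $u$ is nontrivial with enough care that the affine-space computation is seen to suffice in full generality.
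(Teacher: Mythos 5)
Your high-level plan matches the paper's Section~5 (a Case-I/Case-II dichotomy on degrees $q^2\pm q$ versus $\frac{q^2-1}{2}$), and your part (b) is essentially the paper's argument. One small slip there: the relevant component group is $Z_G(X')/Z_G(X')^{\circ}$, not $Z_G(s)/Z_G(s)^{\circ}$; the latter is trivial since $Z_G(-I)=G$ is connected. Lemma~\ref{lemm:a fundamental lemma} is applied to the full element $X'$, whose centraliser has component group $\{\pm1\}$, and $n_F$ acts by multiplication by the nontrivial class of $X'$, which is what swaps the two rational classes within the geometric class.

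Part (a), however, has a genuine gap. You assert that when $Z_{G_1}(\bar s)$ is all of $\mathrm{SL}_2$ (so $\bar s=\pm I$) ``we are in the situation $g\in G_1$,'' and invoke Proposition~\ref{prop:unipotent centraliser prop}(3)(i). This implication is false: $\bar s=\pm I$ does force $s=\pm I$ (the only semisimple elements of $G$ mapping to $\pm I$ under $\rho_{r,1}$ are $\pm I$, because $G^1$ is unipotent), but the unipotent part $u$ of $g$ can perfectly well have a nontrivial $G^1$-component --- e.g.\ $g=\begin{bmatrix}1&1\\0&1\end{bmatrix}\begin{bmatrix}1+\varepsilon&0\\0&1-\varepsilon\end{bmatrix}$ --- so $g\notin G_1$ and 4.4(3)(i) does not apply. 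This is exactly the case the paper settles via the explicit element $A=\begin{bmatrix}1&u\\0&1\end{bmatrix}\cdot\begin{bmatrix}1+a\varepsilon&b\varepsilon\\c\varepsilon&1-a\varepsilon\end{bmatrix}$ with $u\neq0$: solving $XA=AX$ shows $Z_G(A)$ is an affine space, hence connected, giving $u\in Z_G(u)^{\circ}=Z_{Z_G(s)^{\circ}}(u)^{\circ}$. Conversely, you assign that $XA=AX$ computation to the regular-semisimple case, where it is neither needed nor of the right shape: if $\bar s$ is regular semisimple then necessarily $\bar u=1$ (a unipotent lying in the torus $Z_{G_1}(\bar s)$ is trivial), so $u\in G^1$, and from the proof of Theorem~\ref{thm:main result} one sees $Z_G(s)^{\circ}\cong T_r$ is abelian, so the hypothesis of the theorem holds for free; alternatively Proposition~\ref{prop:unipotent centraliser prop}(3)(ii) applies directly since $u\in G^1=G^{[(r+1)/2]}$ when $r=2$. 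In short, the case that actually requires the explicit centraliser computation ($s=\pm I$, $\bar u\neq1$) is left uncovered in your proposal, and the case you route through that computation ($s$ regular semisimple) is handled trivially without it.
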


We expect that this criterion can be extended to $\mathrm{SL}_n(\mathbb{F}_q[[\pi]]/\pi^r)$ for any $n,r\geq 2$, in good characteristics.

\bibliographystyle{alpha}
\bibliography{zchenrefs}

\end{document}